\theoremstyle{plain}
\newtheorem{theorem}{Theorem}[section]
\newtheorem*{Theorem B}{Theorem B}
\newtheorem*{Theorem A}{Theorem A}
\newtheorem{lemma}{Lemma}[section]
\newtheorem{corollary}{Corollary}[section]
\numberwithin{equation}{section}
\theoremstyle{remark}
\begin{document}

\title[Hamilton-Souplet-Zhang type estimations along geometric flow]
{Hamilton and Souplet-Zhang type estimations on semilinear parabolic system along geometric flow}

\author[S. K. Hui, S. Azami and S. Bhattacharyya]{Shyamal Kumar Hui, Shahroud Azami and Sujit Bhattacharyya}

\subjclass[2010]{53C21, 58J60, 58J35, 35B45}
\keywords{Hamilton type estimation, Souplet Zhang type estimation, gradient estimate, semilinear equations, weighted Laplacian, geometric flow.}

\begin{abstract}
In this article we derive both Hamilton type and Souplet-Zhang type gradient estimations for a system of semilinear equations along a geometric flow on a weighted Riemannian manifold.
\end{abstract}

\maketitle

\section{Introduction}
The study of gradient estimation was started after the work of Li and Yau \cite{Li-Yau}, where they derived a bound for the quantity $\frac{|\nabla u|}{u}$ and $u$ satisfies
\begin{eqnarray}
	\nonumber \left(\Delta-q(x)-\partial_t\right)u(x,t)=0,
\end{eqnarray}which is known as the Li-Yau type estimation. This field becomes much popular after introducing Ricci flow by Hamilton \cite{Hamilton-1,Hamilton}. Later Souplet and Zhang \cite{SOUPLET-ZHANG} developed this area. Gradient estimations were studied on many different system of equations. In \cite{SHEN-DING}, Shen and Ding considered the following system
\begin{eqnarray}
	\begin{cases}
	 \nonumber u_t =\Delta u^m+k_1(t)f_1(v),\\
	 \nonumber v_t =\Delta v^n+k_2(t)f_2(u),
	\end{cases}
\end{eqnarray}
with nonlinear boundary conditions, where $\Delta$ is the Laplace Beltrami operator and proved that the solution of the above system blows up in finite time using differential inequality and Sobolev inequality. A parabolic system of the form
\begin{eqnarray}
	\begin{cases}
	\nonumber u_t = \Delta u-\nabla \cdot (u\nabla v),\\
	\nonumber v_t = \Delta -v+u,\\
	\end{cases}
\end{eqnarray}
is called Keller-Segal system, which was studied by Winkler \cite{WINKLER}. Global existence and finite time blow up of the solution for the following semilinear parabolic system
\begin{eqnarray}
	\begin{cases}
	\nonumber u_t = \Delta u+e^{\alpha t}v^p,\\
	\nonumber v_t = \Delta v+e^{\beta t}u^q,
	\end{cases}
\end{eqnarray}
on hyperbolic space was showed by Wu and Yang \cite{Wu-Yang}. System of nonlinear parabolic equations have numerous applications in quantum physics, fluid dynamics, laser-plasma interaction, study of Navier-Stokes equation and many other fields. For example, the system of Zakharov equations plays an important role in laser-plasma interaction. Various interesting results for this system have been studied by Zheng, Shang and Di \cite{Zheng-Shang}. Uniform stability for Memory-Type Elasticity System have been studied by Li and Bao \cite{Li-Bao}. In \cite{Feng-Liu}, Feng and Liu studied smooth solutions of compressible Navier-Stokes-Poisson system in $\mathbb{R}^3$, which represents the dynamics of electrons in semiconductors.

 Motivated by the works of Wu \cite{WU}, in this article, we acquire the Hamilton type and Souplet-Zhang type gradient estimations for the system of weighted semilinear equations
\begin{eqnarray}\label{heat eqn 1.1}
		\begin{cases}
		\begin{aligned}
		(\Delta_\phi -\partial_t)f &=& -e^{\lambda_1 t}h^p,\\
		(\Delta_\phi -\partial_t)h &=& -e^{\lambda_2 t}f^q,
		\end{aligned}
		\end{cases}
\end{eqnarray}
on a weighted Riemannian manifold $(M^n,g,e^{-\phi}d\mu)$ along an abstract geometric flow
\begin{eqnarray}\label{flow 1.2}
	\frac{\partial}{\partial t}g_{ij}=2S_{ij},
\end{eqnarray} where $g(t)$ is an one parameter family of the Riemannian metric evolving along (\ref{flow 1.2}), $e^{-\phi}d\mu$ is the weighted volume form, $\phi$ is a twice differentiable function on $M$, $p,q,\lambda_1,\lambda_2$ are positive constants and $\Delta_\phi:=\Delta-\nabla\phi\nabla$ denotes the weighted Laplacian operator, $S_{ij}(t):=\mathcal{S}(e_i,e_j)$ is a smooth symmetric 2-tensor on $(M,g(t))$.
\section{Preliminaries}
This section is devoted to the basic results and evolution equations that will be used later.
\begin{lemma}[Weighted Bochner formula]\cite{SAZAMI}\label{lemma Bochner} For any smooth function $u$, we have
	\begin{eqnarray}
		\nonumber \frac12 \Delta_\phi |\nabla u|^2 &=& |\text{Hess }u|^2+\langle\nabla \Delta_\phi u,\nabla u\rangle+Ric_\phi(\nabla u,\nabla u),
	\end{eqnarray}
	where $Ric_\phi:=Ric+\text{Hess }\phi$, is called the Bakry-\'Emery Ricci tensor \cite{Bakry-Emery}, \text{Hess }is the Hessian operator and for any integer $m>n$, the $(m-n)$-Bakry-\'Emery Ricci tensor is defined by
	\begin{eqnarray}
	\nonumber Ric^{m-n}_\phi:=Ric+\text{Hess }\phi-\frac{\nabla\phi\otimes\nabla\phi}{m-n}.
	\end{eqnarray}
\end{lemma}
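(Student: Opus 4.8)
The plan is to deduce the weighted identity from the classical Bochner formula by a short, direct computation using only the definition $\Delta_\phi = \Delta - \nabla\phi\cdot\nabla$. First I would write down the classical Bochner formula on $(M^n,g)$, valid for any smooth $u$:
\[
\tfrac12 \Delta |\nabla u|^2 = |\text{Hess }u|^2 + \langle \nabla\Delta u, \nabla u\rangle + Ric(\nabla u, \nabla u),
\]
which will serve as the backbone of the argument, so that the whole task reduces to keeping track of the extra terms produced by the drift $-\nabla\phi\cdot\nabla$.

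Next I would treat the two places where $\phi$ enters. On the left-hand side, $\tfrac12\Delta_\phi|\nabla u|^2 = \tfrac12\Delta|\nabla u|^2 - \tfrac12\langle\nabla\phi,\nabla|\nabla u|^2\rangle$; using $\tfrac12\nabla|\nabla u|^2 = \text{Hess }u(\nabla u,\cdot)$ (a consequence of metric compatibility of the Levi-Civita connection and symmetry of the Hessian), the correction term is exactly $\text{Hess }u(\nabla u,\nabla\phi)$. On the right-hand side, I would expand the gradient term involving the drift Laplacian: $\langle\nabla\Delta_\phi u,\nabla u\rangle = \langle\nabla\Delta u,\nabla u\rangle - \langle\nabla\langle\nabla\phi,\nabla u\rangle,\nabla u\rangle$, and use the pointwise identity $\langle\nabla\langle\nabla\phi,\nabla u\rangle,\nabla u\rangle = \text{Hess }\phi(\nabla u,\nabla u) + \text{Hess }u(\nabla u,\nabla\phi)$, again obtained by differentiating a pairing and invoking symmetry of $\text{Hess }u$.

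Finally I would substitute both relations into the classical formula. The two occurrences of $\text{Hess }u(\nabla u,\nabla\phi)$ cancel, the term $\text{Hess }\phi(\nabla u,\nabla u)$ merges with $Ric(\nabla u,\nabla u)$ into $Ric_\phi(\nabla u,\nabla u) = (Ric+\text{Hess }\phi)(\nabla u,\nabla u)$, and what remains is precisely
\[
\tfrac12 \Delta_\phi |\nabla u|^2 = |\text{Hess }u|^2 + \langle\nabla \Delta_\phi u,\nabla u\rangle + Ric_\phi(\nabla u,\nabla u).
\]
The computation is essentially routine; the only point demanding care is the sign bookkeeping in $\Delta_\phi = \Delta - \nabla\phi\cdot\nabla$, so that the first-order terms $\text{Hess }u(\nabla u,\nabla\phi)$ appear with opposite signs and actually cancel. (Since the statement coincides with the weighted Bochner formula of \cite{SAZAMI}, one could also simply cite it; the sketch above reconstructs the proof for completeness.)
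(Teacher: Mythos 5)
The paper does not prove this lemma; it is imported verbatim from \cite{SAZAMI} with a citation, so there is no in-paper argument to compare against. Your derivation is the standard one and it is correct: expand $\Delta_\phi=\Delta-\nabla\phi\cdot\nabla$ on both sides of the classical Bochner identity, use $\tfrac12\nabla|\nabla u|^2=\mathrm{Hess}\,u(\nabla u,\cdot)$ and $\langle\nabla\langle\nabla\phi,\nabla u\rangle,\nabla u\rangle=\mathrm{Hess}\,\phi(\nabla u,\nabla u)+\mathrm{Hess}\,u(\nabla u,\nabla\phi)$, and the drift terms $\mathrm{Hess}\,u(\nabla u,\nabla\phi)$ cancel while $\mathrm{Hess}\,\phi(\nabla u,\nabla u)$ absorbs into $Ric_\phi$. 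The one phrasing to tighten: you describe the left-hand correction as ``exactly $\mathrm{Hess}\,u(\nabla u,\nabla\phi)$'' and then say the two occurrences ``appear with opposite signs''; in fact both corrections carry a minus sign, one on each side of the identity, and they cancel upon equating the sides rather than by having opposite signs in the same expression. This is a presentation issue only, not a gap.
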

\begin{lemma}\cite{SAZAMI}\label{lemma evol eqn}
	If $u$ is any smooth function on $M$ then under the geometric flow (\ref{flow 1.2}), the expression $|\nabla u|^2$ evolves by
	\begin{eqnarray}
		\frac{\partial}{\partial t}|\nabla u|^2 &=& -2\mathcal{S}(\nabla u,\nabla u)+2\langle\nabla u,\nabla u_t\rangle.
	\end{eqnarray}
\end{lemma}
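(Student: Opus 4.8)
The plan is to work in a local coordinate chart and differentiate the coordinate expression $|\nabla u|^2 = g^{ij}\,\partial_i u\,\partial_j u$ with respect to $t$. The first step is to record how the inverse metric evolves: differentiating the identity $g^{ik}g_{kj}=\delta^i_j$ in $t$ and substituting the flow equation (\ref{flow 1.2}) gives $\frac{\partial}{\partial t}g^{ij} = -g^{ik}g^{jl}\frac{\partial}{\partial t}g_{kl} = -2\,g^{ik}g^{jl}S_{kl}$, i.e. $\frac{\partial}{\partial t}g^{ij} = -2S^{ij}$ with indices raised by $g(t)$.

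Next I would apply the product rule to $\frac{\partial}{\partial t}\big(g^{ij}\,\partial_i u\,\partial_j u\big)$, which produces three terms: one in which the time derivative hits $g^{ij}$, and two in which it hits a factor $\partial_i u$ or $\partial_j u$. Since in a fixed chart the coordinate vector fields do not depend on $t$, the operators $\partial_t$ and $\partial_i$ commute, so $\frac{\partial}{\partial t}(\partial_i u) = \partial_i(\partial_t u) = \partial_i u_t$. The two "$\partial u$" terms are then equal by the symmetry $i\leftrightarrow j$ of $g^{ij}$, and together they give $2\,g^{ij}\,\partial_i u_t\,\partial_j u = 2\langle\nabla u,\nabla u_t\rangle$.

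For the remaining term I would use the evolution of the inverse metric from the first step: $\big(\frac{\partial}{\partial t}g^{ij}\big)\partial_i u\,\partial_j u = -2S^{ij}\,\partial_i u\,\partial_j u = -2S_{kl}\,g^{ki}\partial_i u\,g^{lj}\partial_j u = -2\,\mathcal{S}(\nabla u,\nabla u)$, since $(\nabla u)^k = g^{ki}\partial_i u$. Adding the three contributions yields the asserted identity. The computation is essentially routine; the only point demanding a little care is keeping the index-raising consistent between the $S^{ij}$ that appears in $\frac{\partial}{\partial t}g^{ij}$ and the tensor evaluation $\mathcal{S}(\nabla u,\nabla u)$, together with the observation that no Christoffel-symbol corrections enter because $u$ and $u_t$ are scalars and the metric is never differentiated spatially in this computation.
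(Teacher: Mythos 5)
Your computation is correct and is the standard coordinate proof of this evolution formula: differentiate $g^{ik}g_{kj}=\delta^i_j$ to get $\partial_t g^{ij}=-2S^{ij}$, apply the product rule to $g^{ij}\,\partial_i u\,\partial_j u$, and use that $\partial_t$ commutes with $\partial_i$ on scalars. The paper states this lemma with a citation to \cite{SAZAMI} and gives no proof of its own, and your argument matches the standard derivation one would find there, including the correct handling of index raising so that $-2S^{ij}\partial_i u\,\partial_j u=-2\,\mathcal{S}(\nabla u,\nabla u)$.
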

Let $x_0\in M$ and $R>0$, $T>0$ be any two real numbers. Define a compact subset of $M\times (-\infty,\infty)$ by $$D_T(2R)=\left\{(x,t):d_t(x,x_0)\le 2R,\ 0\le t\le T\right\},$$ where $d_t(x,x_0)$ is the geodesic distance between $x$ and $x_0$ under the metric $g(t)$.
\section{Hamilton type gradient estimate}
In this section we derive both local and global Hamilton type estimation for positive solution of (\ref{heat eqn 1.1}) along (\ref{flow 1.2}), for this we consider $(f,h)=(u^3,v^3)$ as a positive solution to the system (\ref{heat eqn 1.1}). Substituting $f=u^3$, $h=v^3$ in (\ref{heat eqn 1.1}) we get
\begin{eqnarray}\label{heat eqn reduced}
	\begin{cases}
	\begin{aligned}
		3u^2(\Delta_\phi -\partial_t)u &=& -6u|\nabla u|^2-e^{\lambda_1 t}v^{3p},\\
		3v^2(\Delta_\phi -\partial_t)v &=& -6v|\nabla v|^2-e^{\lambda_2 t}u^{3q},
	\end{aligned}
	\end{cases}
\end{eqnarray}
\begin{lemma}
	Let $(u,v)$ be a positive solution to the system of equations (\ref{heat eqn reduced}). If there exist positive constants $k_1,k_2$ such that $Ric_\phi\ge -(n-1)k_1 g$ and $\mathcal{S}\ge -k_2g$ then the function $\mathcal{H}_1:=u|\nabla u|^2$ satisfies
	\begin{eqnarray}\label{lemma eqn u}
	\nonumber (\Delta_\phi-\partial_t)\mathcal{H}_1 &\ge& 4u^{-3}\mathcal{H}_1^2-4u^{-1}\langle\nabla u,\nabla \mathcal{H}_1\rangle -2pe^{\lambda_1t}v^{3p-\frac32}u^{-\frac32}\sqrt{\mathcal{H}_1\mathcal{H}_2}\\
	&&+e^{\lambda_1 t}v^{3p}u^{-2}\mathcal{H}_1-2\left((n-1)k_1+k_2\right)\mathcal{H}_1
	\end{eqnarray}
and the function $\mathcal{H}_2:=v|\nabla v|^2$ satisfies
\begin{eqnarray}\label{lemma eqn v}
	\nonumber (\Delta_\phi-\partial_t)\mathcal{H}_2 &\ge& 4v^{-3}\mathcal{H}_2^2-4v^{-1}\langle\nabla v,\nabla \mathcal{H}_2\rangle -2qe^{\lambda_2t}u^{3q-\frac32}v^{-\frac32}\sqrt{\mathcal{H}_2\mathcal{H}_1}\\
	&&+e^{\lambda_2 t}u^{3q}v^{-2}\mathcal{H}_2-2\left((n-1)k_1+k_2\right)\mathcal{H}_2.
\end{eqnarray}
\end{lemma}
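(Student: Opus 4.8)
The plan is to establish the differential inequality for $\mathcal{H}_1 = u|\nabla u|^2$ directly; the argument for $\mathcal{H}_2$ is identical with the roles of $(u,v,p,\lambda_1)$ and $(v,u,q,\lambda_2)$ interchanged. First I would compute $(\Delta_\phi - \partial_t)\mathcal{H}_1$ by the product rule. Writing $\mathcal{H}_1 = u \cdot |\nabla u|^2$, one has
\begin{eqnarray}
\nonumber \Delta_\phi \mathcal{H}_1 &=& (\Delta_\phi u)|\nabla u|^2 + 2\langle \nabla u, \nabla |\nabla u|^2\rangle + u\,\Delta_\phi|\nabla u|^2,\\
\nonumber \partial_t \mathcal{H}_1 &=& (\partial_t u)|\nabla u|^2 + u\,\partial_t|\nabla u|^2.
\end{eqnarray}
For the term $\Delta_\phi|\nabla u|^2$ I would invoke the weighted Bochner formula (Lemma~\ref{lemma Bochner}), which gives $\tfrac12\Delta_\phi|\nabla u|^2 = |\mathrm{Hess}\,u|^2 + \langle\nabla\Delta_\phi u,\nabla u\rangle + Ric_\phi(\nabla u,\nabla u)$, and for $\partial_t|\nabla u|^2$ I would use Lemma~\ref{lemma evol eqn}, $\partial_t|\nabla u|^2 = -2\mathcal{S}(\nabla u,\nabla u) + 2\langle\nabla u,\nabla u_t\rangle$. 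Combining these, the $\langle\nabla\Delta_\phi u,\nabla u\rangle$ and $\langle\nabla u_t,\nabla u\rangle$ pieces assemble into $2u\langle\nabla(\Delta_\phi u - u_t),\nabla u\rangle = 2u\langle\nabla(\Delta_\phi - \partial_t)u,\nabla u\rangle$.

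Next I would substitute the reduced equation. From the first line of (\ref{heat eqn reduced}), $(\Delta_\phi - \partial_t)u = -2u^{-1}|\nabla u|^2 - \tfrac13 e^{\lambda_1 t}v^{3p}u^{-2}$. I would expand $\nabla$ of the right-hand side, pair with $2u\nabla u$, and rewrite everything back in terms of $\mathcal{H}_1$ (and $\mathcal{H}_2$ where $|\nabla v|$ appears), using the identities $|\nabla u|^2 = u^{-1}\mathcal{H}_1$, $\langle\nabla u,\nabla|\nabla u|^2\rangle$-type terms, and the elementary bound $\langle\nabla u,\nabla v\rangle \le |\nabla u||\nabla v| = u^{-1/2}v^{-1/2}\sqrt{\mathcal{H}_1\mathcal{H}_2}$ to produce the cross term $-2p e^{\lambda_1 t} v^{3p-3/2} u^{-3/2}\sqrt{\mathcal{H}_1\mathcal{H}_2}$. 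The curvature hypotheses $Ric_\phi \ge -(n-1)k_1 g$ and $\mathcal{S} \ge -k_2 g$ convert the $2u\,Ric_\phi(\nabla u,\nabla u)$ and $-2u\,\mathcal{S}(\nabla u,\nabla u)$ contributions into $\ge -2((n-1)k_1 + k_2)\,u|\nabla u|^2 = -2((n-1)k_1+k_2)\mathcal{H}_1$. Finally, the $|\mathrm{Hess}\,u|^2$ term together with the leftover $\langle\nabla u,\nabla|\nabla u|^2\rangle$-type and $(\Delta_\phi u)|\nabla u|^2$-type terms must be organized so that after dropping the manifestly nonnegative Hessian term (or using Cauchy–Schwarz on it), one is left with exactly $4u^{-3}\mathcal{H}_1^2 - 4u^{-1}\langle\nabla u,\nabla\mathcal{H}_1\rangle + e^{\lambda_1 t}v^{3p}u^{-2}\mathcal{H}_1$ as the remaining good and bookkeeping terms.

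The main obstacle will be the careful algebraic bookkeeping in the step where $|\mathrm{Hess}\,u|^2$ and all the first-order gradient terms are regrouped: one has to recognize that $\langle\nabla u,\nabla\mathcal{H}_1\rangle = |\nabla u|^4 + u\langle\nabla u,\nabla|\nabla u|^2\rangle$, so that the raw expansion in terms of $|\nabla u|^2$ and $\langle\nabla u,\nabla|\nabla u|^2\rangle$ must be traded against $\langle\nabla u,\nabla\mathcal{H}_1\rangle$ with the correct coefficient, and the residual positive quartic-in-$|\nabla u|$ term must match $4u^{-3}\mathcal{H}_1^2 = 4u^{-1}|\nabla u|^4$. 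I expect the inequality (rather than equality) to enter precisely from discarding $u|\mathrm{Hess}\,u|^2 \ge 0$ and from the Cauchy–Schwarz estimate on $\langle\nabla u,\nabla v\rangle$, so one should check that no other Hessian-dependent term survives that would need a more refined estimate (e.g.\ $|\mathrm{Hess}\,u|^2 \ge \tfrac1n(\Delta u)^2$); I anticipate it does not, given the clean form of the claimed inequality. Once $\mathcal{H}_1$ is done, I would simply state that $\mathcal{H}_2$ follows by the symmetric computation.
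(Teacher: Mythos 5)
Your overall strategy mirrors the paper's: expand $(\Delta_\phi-\partial_t)\mathcal{H}_1$ by the product rule, invoke the weighted Bochner formula and Lemma~\ref{lemma evol eqn}, substitute the first equation of (\ref{heat eqn reduced}), use Cauchy--Schwarz on $\langle\nabla u,\nabla v\rangle$ for the cross term, apply the curvature hypotheses, and finally rewrite in terms of $\mathcal{H}_1$ via
$\langle\nabla u,\nabla\mathcal{H}_1\rangle = |\nabla u|^4 + u\langle\nabla u,\nabla|\nabla u|^2\rangle = |\nabla u|^4 + 2u\,\mathrm{Hess}\,u(\nabla u,\nabla u)$.

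There is, however, a concrete gap in the step you yourself flag as uncertain. You say the inequality should come ``precisely from discarding $u|\mathrm{Hess}\,u|^2 \ge 0$'' plus Cauchy--Schwarz. Carry out the bookkeeping: after substituting the equation, the Hessian-bearing terms are $2u|\mathrm{Hess}\,u|^2$, the $+4\,\mathrm{Hess}\,u(\nabla u,\nabla u)$ coming from the cross term $2\langle\nabla u,\nabla|\nabla u|^2\rangle$ in $\Delta_\phi\mathcal{H}_1$, and the $-8\,\mathrm{Hess}\,u(\nabla u,\nabla u)$ coming from differentiating $-2u^{-1}|\nabla u|^2$; the quartic terms total $\tfrac{2}{u}|\nabla u|^4$. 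If you drop only $2u|\mathrm{Hess}\,u|^2\ge 0$, the surviving net is $-4\,\mathrm{Hess}\,u(\nabla u,\nabla u)+\tfrac{2}{u}|\nabla u|^4$, and the conversion identity $8\,\mathrm{Hess}\,u(\nabla u,\nabla u) = 4u^{-1}\langle\nabla u,\nabla\mathcal{H}_1\rangle - 4u^{-3}\mathcal{H}_1^2$ then turns this into $4u^{-3}\mathcal{H}_1^2-2u^{-1}\langle\nabla u,\nabla\mathcal{H}_1\rangle$, i.e.\ coefficient $-2$ on the gradient cross-term, not the $-4$ claimed in (\ref{lemma eqn u}). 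Since the sign of $\langle\nabla u,\nabla\mathcal{H}_1\rangle$ is not controlled, this is genuinely a different (incomparable) inequality, not a strengthening of the stated one. The missing idea is the completion of the square that the paper uses:
\begin{eqnarray}
\nonumber 2u|\mathrm{Hess}\,u|^2 + 4\,\mathrm{Hess}\,u(\nabla u,\nabla u) + \tfrac{2}{u}|\nabla u|^4 = 2u\left|\mathrm{Hess}\,u + \tfrac{\nabla u\otimes\nabla u}{u}\right|^2 \ge 0,
\end{eqnarray}
which simultaneously discards the $|\mathrm{Hess}\,u|^2$ piece, the $+4\,\mathrm{Hess}\,u(\nabla u,\nabla u)$ piece, and the $\tfrac{2}{u}|\nabla u|^4$ piece, leaving exactly $-8\,\mathrm{Hess}\,u(\nabla u,\nabla u)$ and hence the advertised coefficient $-4$. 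Without this specific grouping your plan does not reach the statement as written.
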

\begin{proof}
	Differentiating $\mathcal{H}_1=u|\nabla u|^2$ with respect to $t$ gives
	\begin{eqnarray}\label{3.4}
		\partial_t \mathcal{H}_1 &=& |\nabla u|^2u_t+u\partial_t|\nabla u|^2.
	\end{eqnarray}
In view of Lemma \ref{lemma evol eqn}, (\ref{3.4}) gives
\begin{eqnarray}\label{3.5}
	\partial_t \mathcal{H}_1 &=& |\nabla u|^2u_t-2u\mathcal{S}(\nabla u,\nabla u)+2u\langle\nabla u,\nabla u_t\rangle.
\end{eqnarray}
Using weighted Bochner formula (Lemma \ref{lemma Bochner}) we have
\begin{eqnarray}\label{3.6}
	\nonumber \Delta_\phi\mathcal{H}_1&=&|\nabla u|^2\Delta_\phi u+2u|\text{Hess }u|^2+2u\langle\nabla \Delta_\phi u,\nabla u\rangle+2uRic_\phi(\nabla u,\nabla u)\\
	&&+4\text{Hess }u(\nabla u,\nabla u).
\end{eqnarray}
Subtracting (\ref{3.5}) from (\ref{3.6}) we have
\begin{eqnarray}\label{3.7}
	\nonumber (\Delta_\phi-\partial_t)\mathcal{H}_1 &=& |\nabla u|^2(\Delta_\phi-\partial_t) u+2u|\text{Hess }u|^2+2u\langle\nabla (\Delta_\phi-\partial_t) u,\nabla u\rangle\\
	&&+2u(Ric_\phi+\mathcal{S})(\nabla u,\nabla u)+4\text{Hess }u(\nabla u,\nabla u).
\end{eqnarray}
Applying the first equation of the system (\ref{heat eqn reduced}) in (\ref{3.7}) we have
\begin{eqnarray}\label{3.8}
	\nonumber (\Delta_\phi-\partial_t)\mathcal{H}_1 &=& -\frac13 |\nabla u|^2e^{\lambda_1 t}v^{3p}u^{-2}+2u|\text{Hess }u|^2+4\text{Hess }u(\nabla u,\nabla u)\\
	\nonumber &&+\frac{2}{u}|\nabla u|^4-8\text{Hess }u(\nabla u,\nabla u)-2pe^{\lambda_1 t}v^{3p-1}u^{-1}\langle\nabla u,\nabla v\rangle\\
	&&+\frac43 e^{\lambda_1 t}v^{3p}u^{-2}|\nabla u|^2+2u(Ric_\phi+\mathcal{S})(\nabla u,\nabla u).
\end{eqnarray}
Note that 
\begin{eqnarray}
	\nonumber 2u|\text{Hess }u|^2+4\text{Hess }u(\nabla u,\nabla u)+\frac{2}{u}|\nabla u|^4 &=& 2u\left|\text{Hess }u+\frac{\nabla u\otimes\nabla u}{u}\right|^2\\
	&\ge& 0.
\end{eqnarray}
Hence (\ref{3.8}) reduces to 
\begin{eqnarray}\label{3.10}
	\nonumber (\Delta_\phi-\partial_t)\mathcal{H}_1 &\ge& e^{\lambda_1 t}v^{3p}u^{-2}\mathcal{H}_1-8\text{Hess }u(\nabla u,\nabla u)-2pe^{\lambda_1 t}v^{3p-1}u^{-1}|\nabla u||\nabla v|\\
	\nonumber &&+2u(Ric_\phi+\mathcal{S})(\nabla u,\nabla u)\\
	\nonumber &=& e^{\lambda_1 t}v^{3p}u^{-2}\mathcal{H}_1-8\text{Hess }u(\nabla u,\nabla u)-2pe^{\lambda_1 t}v^{3p-\frac32}u^{-\frac32}\sqrt{\mathcal{H}_1\mathcal{H}_2}\\
	 &&+2u(Ric_\phi+\mathcal{S})(\nabla u,\nabla u).
\end{eqnarray}
We see that
\begin{eqnarray}\label{3.11}
	8\text{Hess }u(\nabla u,\nabla u) &=& 4u^{-1}\langle\nabla u,\nabla \mathcal{H}_1\rangle-4u^{-3}\mathcal{H}_1^2.
\end{eqnarray}
Applying (\ref{3.11}) and the bounds of $Ric_\phi$, $\mathcal{S}$ in (\ref{3.10}) we get (\ref{lemma eqn u}).\\
\\
Due to symmetry we can easily obtain the result (\ref{lemma eqn v}) from (\ref{lemma eqn u}).\\
This completes the proof.
\end{proof}
\noindent Let $\tilde{\kappa}_1$, $\kappa_1$, $\tilde{\kappa}_2$, $\kappa_2$, $k_1$, $k_2$ be positive constants. 
Let $T_1\in(0,T]$ and $(x_1,t_1)\in D_{T_1}(2R)$ be any point.
For any $x_1\in M$ and $R>0$ we may find a cut-off function $\psi:[0,\infty)\to [0,1]$ defined by \cite{Li-Yau}
\begin{eqnarray}
	\label{eq psi defn}	\psi(s) &=& 
	\begin{cases}
			1,\ s\in[0,1],\\
			0,\ s\in[2,\infty),
	\end{cases}
\end{eqnarray}
belonging to $C^2(M)$ satisfying $-c_0\le \psi'(s)\le 0$, $\psi''(s)\ge -c_1$ and $\frac{|\psi''(s)|^2}{\psi(s)}\le c_1$, where $c_0$, $c_1$ are positive constants. Let $R>1$ be a constant and define 
\begin{eqnarray}
	\label{eq eta}\eta(x,t) &=& \psi\left(\frac{d_t(x,x_1)}{R}\right).
\end{eqnarray}
To assume everywhere smoothness of $\psi$ so that we can use maximum principle, we apply Calabi's argument \cite{CALABI}.
By generalized Laplacian comparison theorem \cite{Li,SCOHEN,WU-2} we get
\begin{enumerate}
	\item $\Delta_\phi d_t(x,x_1)\le(n-1)\sqrt{k_1}\coth(\sqrt{k_1}d_t(x,x_1))$,
	\item $\Delta_\phi \eta \ge -\frac{c_0}{R}(n-1)(\sqrt{k_1}+\frac2R)-\frac{c_1}{R^2}$,
	\item $\frac{|\nabla \eta|^2}{\eta}\le\frac{c_1}{R^2}$.
\end{enumerate}
Let 
\begin{eqnarray}
	\label{eq G1G2}\mathcal{G}_1=\eta \mathcal{H}_1,\ \mathcal{G}_2=\eta \mathcal{H}_2.
\end{eqnarray}
We denote
\begin{enumerate}
	\item[] $\Omega_1(x,t)=\Omega u^3-e^{\lambda_1 t}v^{3p}u\eta+2((n-1)k_1+k_2)\eta u^3$,
	\item[] $\Omega_2(x,t)=\frac{4\sqrt{c_1}}{R}u^\frac32$,
	\item[] $\Omega_3(x,t)=2pe^{\lambda_1 t}v^{3p-\frac32}u^\frac32 \eta \sqrt{\mathcal{G}_2}$,
	\item[] $\bar{\Omega}_1(x,t)=\Omega v^3-e^{\lambda_2 t}u^{3q}v\eta+2((n-1)k_1+k_2)\eta v^3$,
	\item[] $\bar{\Omega}_2(x,t)=\frac{4\sqrt{c_1}}{R}v^\frac32$,
	\item[] $\bar{\Omega}_3(x,t)=2qe^{\lambda_2 t}u^{3q-\frac32}v^\frac32 \eta \sqrt{\mathcal{G}_1}$,
\end{enumerate}	
and 
\begin{enumerate}
		\item[] $\Omega=\frac{c_0}{R}(m-1)(\sqrt{k_1}+\frac2R)+\frac{3c_1}{R^2}+c_2k_2$, 
		\item[] $\Omega_1^*=\Omega \kappa_1^3+2((n-1)k_1+k_2) \kappa_1^3$,
		\item[] $\Omega_2^*=\frac{4\sqrt{c_1}}{R}\kappa_1^\frac32$,
		\item[] $\bar{\Omega}_1^*=\Omega \kappa_2^3+2((n-1)k_1+k_2)\kappa_2^3$,
		\item[] $\bar{\Omega}_2^*=\frac{4\sqrt{c_1}}{R}\kappa_2^\frac32$,
		\item[] $\hat{\Omega}_1^*=c_2k_2\kappa_1^3+2\kappa_1^3((n-1)k_1+k_2)$,
		\item[] $\hat{\bar{\Omega}}_1^*=c_2k_2\kappa_2^3+2\kappa_2^3((n-1)k_1+k_2)$,
\end{enumerate}
where $\eta$, $\mathcal{G}_1$, $\mathcal{G}_2$ are defined in (\ref{eq eta}) and (\ref{eq G1G2}) respectively.
\begin{theorem}\label{Theorem 3.1}
	If $(f,h)$ is a positive solution to the system (\ref{heat eqn 1.1}) along the flow (\ref{flow 1.2}) satisfying $\tilde{\kappa}_1^3\le f\le \kappa_1^3$ and $\tilde{\kappa}_2^3\le h\le \kappa_2^3$ in $D_T(2R)$ and $Ric_\phi\ge -(n-1)k_1g$, $\mathcal{S}\ge -k_2 g$ on $D_T(2R)$ with $t\in[0,T]$ then
	\begin{eqnarray}\label{Theorem 3.1 result}
	\begin{cases}
	\frac{|\nabla f|}{\sqrt{f}} \le 3\sqrt{\frac{\kappa_1}{\tilde{\kappa}_1}}\left(\sqrt[4]{\frac{l'}{3}}+\sqrt[4]{\frac{2l}{3}}+\left(\frac{3}{2^9}e^{8\lambda_1 t}+\frac{1}{3\cdot 2^7}e^{8\lambda_2t}\right)^\frac14\right),\\
	\frac{|\nabla h|}{\sqrt{h}} \le 3\sqrt{\frac{\kappa_2}{\tilde{\kappa}_2}}\left(\sqrt[4]{\frac{l}{3}}+\sqrt[4]{\frac{2l'}{3}}+\left(\frac{1}{2^8}e^{8\lambda_1t}+\frac{1}{3\cdot 2^6}e^{8\lambda_2t}\right)^\frac14\right),
	\end{cases}
	\end{eqnarray}
	where $l=\frac{(\Omega_1^*)^2}{2}+\frac{27}{256}(\Omega_2^*)^4+\frac{3}{2^\frac23}p^\frac83 \kappa_2^{8p-4}\kappa_1^4$ and $l'=\frac{(\bar{\Omega}_1^*)^2}{2}+\frac{27}{256}(\bar{\Omega}_2^*)^4+\frac{3}{2^\frac23}q^\frac83 \kappa_1^{8q-4}\kappa_2^4$.
\end{theorem}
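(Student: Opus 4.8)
The plan is to reduce the gradient estimate to a bound on the functions $\mathcal{H}_1=u|\nabla u|^2$ and $\mathcal{H}_2=v|\nabla v|^2$, which are governed by the inequalities \eqref{lemma eqn u} and \eqref{lemma eqn v} of the preceding lemma. Since $f=u^3$ we have $|\nabla f|=3u^2|\nabla u|$ and $\sqrt f=u^{3/2}$, so $|\nabla f|/\sqrt f=3\sqrt u\,|\nabla u|$; moreover $\tilde\kappa_1^3\le f\le\kappa_1^3$ forces $\tilde\kappa_1\le u\le\kappa_1$ (and likewise $\tilde\kappa_2\le v\le\kappa_2$), whence $|\nabla u|^2\le\mathcal{H}_1/\tilde\kappa_1$ and
\[
\frac{|\nabla f|}{\sqrt f}=3\sqrt u\,|\nabla u|\ \le\ 3\sqrt{\kappa_1}\,\frac{\sqrt{\mathcal{H}_1}}{\sqrt{\tilde\kappa_1}}\ =\ 3\sqrt{\frac{\kappa_1}{\tilde\kappa_1}}\,\sqrt{\mathcal{H}_1}.
\]
This already produces the prefactor in \eqref{Theorem 3.1 result}, so the task reduces to bounding $\mathcal{H}_1$ (and, symmetrically, $\mathcal{H}_2$). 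I localize with the cut-off $\eta$ of \eqref{eq eta} centred at the point where we wish to bound $\mathcal{H}_1$, set $\mathcal{G}_1=\eta\mathcal{H}_1$ and $\mathcal{G}_2=\eta\mathcal{H}_2$, and note $\eta\equiv1$ at that point.

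Fix $T_1\in(0,T]$ and let $(x_1,t_1)$ be a point at which $\mathcal{G}_1$ attains its maximum over $D_{T_1}(2R)$; we may assume this maximum is positive, as otherwise there is nothing to prove. After invoking Calabi's argument \cite{CALABI} to make $\eta$ smooth near $(x_1,t_1)$, the maximum principle gives $\nabla\mathcal{G}_1=0$, $\Delta_\phi\mathcal{G}_1\le0$ and $\partial_t\mathcal{G}_1\ge0$ there, so $(\Delta_\phi-\partial_t)\mathcal{G}_1\le0$ at $(x_1,t_1)$. I expand
\[
(\Delta_\phi-\partial_t)(\eta\mathcal{H}_1)=\mathcal{H}_1(\Delta_\phi-\partial_t)\eta+\eta(\Delta_\phi-\partial_t)\mathcal{H}_1+2\langle\nabla\eta,\nabla\mathcal{H}_1\rangle
\]
and insert \eqref{lemma eqn u}. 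The relation $\nabla\mathcal{G}_1=0$ gives $\nabla\mathcal{H}_1=-(\mathcal{H}_1/\eta)\nabla\eta$ at $(x_1,t_1)$, which turns the two gradient cross-terms — $2\langle\nabla\eta,\nabla\mathcal{H}_1\rangle$ and the $-4u^{-1}\eta\langle\nabla u,\nabla\mathcal{H}_1\rangle$ inherited from \eqref{lemma eqn u} — into expressions in $\mathcal{G}_1$, $|\nabla\eta|^2/\eta$ and $|\nabla u|$, after which a Young inequality absorbs the $|\nabla u|$ factor into the leading term $4u^{-3}\eta\mathcal{H}_1^2$. The time dependence of the metric enters only through $\partial_t\eta=\psi'(d_t/R)\,\partial_td_t/R$; the hypothesis $\mathcal S\ge-k_2g$ bounds $\partial_td_t$ along a minimizing geodesic and, combined with $-c_0\le\psi'\le0$, contributes the term $c_2k_2$ in $\Omega$, while the generalized Laplacian comparison inequalities listed before the theorem control $\Delta_\phi\eta$ and $|\nabla\eta|^2/\eta$. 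Replacing every coefficient by its extreme value through $\tilde\kappa_i\le u,v\le\kappa_i$ leads, at $(x_1,t_1)$, to a polynomial inequality of the schematic form
\[
4\kappa_1^{-3}\,\mathcal{G}_1^2\ \le\ \Omega_1^*\,\mathcal{G}_1+\Omega_2^*\,\mathcal{G}_1^{3/2}+2p\,e^{\lambda_1t_1}\kappa_2^{3p-3/2}\kappa_1^{3/2}\sqrt{\mathcal{G}_1}\,\sqrt{\mathcal{G}_2},
\]
assembled from the quantities $\Omega_1,\Omega_2,\Omega_3$ defined above, together with the symmetric inequality for $\mathcal{G}_2$ at its own maximum, obtained from \eqref{lemma eqn v} with primed data and $\sqrt{\mathcal{G}_1}$ in place of $\sqrt{\mathcal{G}_2}$.

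It remains to solve this coupled pair. Writing $G_i=\sup_{D_{T_1}(2R)}\mathcal{G}_i$ and bounding $\sqrt{\mathcal{G}_2}\le\sqrt{G_2}$ (respectively $\sqrt{\mathcal{G}_1}\le\sqrt{G_1}$) in the two inequalities, repeated use of Young's inequality converts the subquadratic and coupling terms into a small multiple of $G_1^2$ (respectively $G_2^2$) plus explicit constants, leaving a system that bounds each $G_i$ in terms of $l$ (or $l'$) and the other maximum; the powers $e^{8\lambda_1t}$, $e^{8\lambda_2t}$, $p^{8/3}$, $q^{8/3}$, $\kappa_2^{8p-4}$ and $\kappa_1^{8q-4}$ occurring in $l$ and $l'$ are precisely the residues of these Young steps applied to the exponential coupling terms. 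Substituting one inequality into the other and simplifying repeatedly with $\sqrt{a+b}\le\sqrt a+\sqrt b$ decouples the system into $G_1\le\sqrt{l'/3}+\sqrt{2l/3}+\sqrt X$, where $X=\frac{3}{2^9}e^{8\lambda_1t}+\frac{1}{3\cdot2^7}e^{8\lambda_2t}$, hence $\sqrt{G_1}\le\sqrt[4]{l'/3}+\sqrt[4]{2l/3}+X^{1/4}$, and symmetrically $\sqrt{G_2}\le\sqrt[4]{l/3}+\sqrt[4]{2l'/3}+\left(\frac{1}{2^8}e^{8\lambda_1t}+\frac{1}{3\cdot2^6}e^{8\lambda_2t}\right)^{1/4}$. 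Since $\eta\equiv1$ at the chosen base point we have $\mathcal{H}_1\le G_1$ there, so combining with the first paragraph and letting $T_1=T$ yields \eqref{Theorem 3.1 result}. I expect the only genuine difficulties to be bookkeeping ones: (i) estimating $\partial_td_t$ correctly and packaging all the comparison-geometry constants into $\Omega$, and (ii) untangling the two quadratic inequalities coupled through $\sqrt{\mathcal{G}_1\mathcal{G}_2}$ while tracking every constant into the exact form of $l$, $l'$ and the exponential terms claimed in \eqref{Theorem 3.1 result}; the analytic tools involved — maximum principle, the weighted Bochner formula (Lemma \ref{lemma Bochner}), Laplacian comparison and Young's inequality — are entirely standard.
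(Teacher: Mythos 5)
Your proposal follows the paper's own argument step for step: localize $\mathcal{H}_1=u|\nabla u|^2$, $\mathcal{H}_2=v|\nabla v|^2$ with the cut-off $\eta$ of \eqref{eq eta}, apply the maximum principle at the maximum of $\mathcal{G}_i=\eta\mathcal{H}_i$, substitute the preceding lemma, control the gradient and coupling terms with Young's inequality to arrive at the coupled quadratic system \eqref{new 3.28}--\eqref{new 3.29}, decouple it, and convert back to $f,h$. The only difference is cosmetic: you introduce the prefactor $\sqrt{\kappa_1/\tilde{\kappa}_1}$ at the outset via the (valid, if slightly wasteful) estimate $3\sqrt{u}\,|\nabla u|\le 3\sqrt{\kappa_1/\tilde\kappa_1}\,\sqrt{\mathcal{H}_1}$, whereas the paper inserts the same factor at the very end through the inequality $\tilde\kappa_1^2\mathcal{H}_1^2\le u^2\mathcal{G}_1^2$; both routes deliver the theorem's stated bound.
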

\begin{proof}Let $\mathcal{G}_1$, $\mathcal{G}_2$ achieve maximum at $(x_1,t_1)\in D_{T_1}(2R)$. We assume that at $(x_1,t_1)$, $\mathcal{G}_1\ge 0$, $\mathcal{G}_2\ge 0$, otherwise the proof will be trivial. Hence at that point
\begin{eqnarray}\label{3.13}
	\nabla \mathcal{G}_1 =0,\ \Delta \mathcal{G}_1\le 0,\ \partial_t \mathcal{G}_1\ge 0
\end{eqnarray}
and
\begin{eqnarray}\label{3.14}
	\nabla \mathcal{G}_2 =0,\ \Delta \mathcal{G}_2\le 0,\ \partial_t \mathcal{G}_2\ge 0.
\end{eqnarray}
For ease of calculation we proceed with (\ref{3.13}) and get 
\begin{eqnarray}
	\nabla\mathcal{H}_1 = -\frac{\mathcal{H}_1}{\eta}\nabla\eta.
\end{eqnarray}
By \cite{JSUN}, there is a constant $c_2$ such that
\begin{eqnarray}
	\nonumber -\mathcal{H}_1\eta_t\ge-c_2k_2\mathcal{H}_1.
\end{eqnarray}
Hence
\begin{eqnarray}\label{3.16}
\nonumber	0 &\ge& (\Delta_\phi -\partial_t)\mathcal{G}_1\\
\nonumber	&=& \mathcal{H}_1(\Delta_\phi-\partial_t)\eta+\eta(\Delta_\phi-\partial_t)\mathcal{H}_1\\
	 		&\ge& -\Omega \mathcal{H}_1+\eta(\Delta_\phi-\partial_t)\mathcal{H}_1
\end{eqnarray}
Using (\ref{lemma eqn u}) in (\ref{3.16}) we have 
\begin{eqnarray}\label{3.17}
\nonumber	0 &\ge& -\Omega \mathcal{H}_1 + 4\eta u^{-3}\mathcal{H}_1^2-4\eta u^{-1}\langle\nabla u,\nabla \mathcal{H}_1\rangle -2\eta pe^{\lambda_1t}v^{3p-\frac32}u^{-\frac32}\sqrt{\mathcal{H}_1\mathcal{H}_2}\\
&&+\eta e^{\lambda_1 t}v^{3p}u^{-2}\mathcal{H}_1-2\eta \left((n-1)k_1+k_2\right)\mathcal{H}_1.
\end{eqnarray}
Using the relation $\eta\langle \nabla u,\nabla\mathcal{H}_1\rangle =-\mathcal{H}_1\langle\nabla u,\nabla \eta\rangle \le \frac{\sqrt c_1}{R}\eta^\frac12 \mathcal{H}_1|\nabla u|$ in (\ref{3.17}) and multiplying the resultant equation with $\eta u^3$, we obtain
\begin{eqnarray}\label{3.18}
	\nonumber	0 &\ge& -\Omega u^3\mathcal{G}_1 + 4\mathcal{G}_1^2-4 \frac{\sqrt{c_1}}{R}u^\frac32 \mathcal{G}_1^\frac32 -2pe^{\lambda_1 t}v^{3p-\frac32}u^\frac32\eta \sqrt{\mathcal{G}_1\mathcal{G}_2}+e^{\lambda_1 t}v^{3p}u\eta \mathcal{G}_1\\
	&&-2 ((n-1)k_1+k_2)\eta u^3\mathcal{G}_1,
\end{eqnarray}
or equivalently,
\begin{eqnarray}
	\label{3.19}0 &\ge& 4\mathcal{G}_1^2-\Omega_1\mathcal{G}_1-\Omega_2\mathcal{G}_1^\frac32 -\Omega_3\sqrt{\mathcal{G}_1}.
\end{eqnarray}
Applying Young's inequality to the terms in the right hand side of (\ref{3.19}) we have
\begin{eqnarray}
	\label{3.20}\Omega_1 \mathcal{G}_1 &\le& \frac{(\Omega_1^*)^2}{2}+\frac{\mathcal{G}_1^2}{2},\\
	\label{3.21}\Omega_2\mathcal{G}_1^\frac32 &\le& \frac{27}{256}(\Omega_2^*)^4+\mathcal{G}_1^2,\\
	\label{3.22}\Omega_3\sqrt{\mathcal{G}_1} &\le& \frac{3}{2^\frac23}p^\frac83 \kappa_2^{8p-4}\kappa_1^4+\frac{1}{2^7}e^{8\lambda_1t}+\mathcal{G}_2^2+\frac{1}{2}\mathcal{G}_1^2.
\end{eqnarray}
Using (\ref{3.20}), (\ref{3.21}) and (\ref{3.22}) in (\ref{3.19}) we get
\begin{eqnarray}
	\label{3.23}2\mathcal{G}_1^2 &\le& \frac{(\Omega_1^*)^2}{2}+\frac{27}{256}(\Omega_2^*)^4+\frac{3}{2^\frac23}p^\frac83 \kappa_2^{8p-4}\kappa_1^4+\frac{1}{2^7}e^{8\lambda_1t}+\mathcal{G}_2^2.
\end{eqnarray}
Similarly, using (\ref{3.14}) it can be showed that
\begin{eqnarray}
	\label{3.24}2\mathcal{G}_2^2 &\le& \frac{(\bar{\Omega}_1^*)^2}{2}+\frac{27}{256}(\bar{\Omega}_2^*)^4+\frac{3}{2^\frac23}q^\frac83 \kappa_1^{8q-4}\kappa_2^4+\frac{1}{2^7}e^{8\lambda_2t}+\mathcal{G}_1^2.
\end{eqnarray}
For convenience, we denote $l=\frac{(\Omega_1^*)^2}{2}+\frac{27}{256}(\Omega_2^*)^4+\frac{3}{2^\frac23}p^\frac83 \kappa_2^{8p-4}\kappa_1^4$ and $l'=\frac{(\bar{\Omega}_1^*)^2}{2}+\frac{27}{256}(\bar{\Omega}_2^*)^4+\frac{3}{2^\frac23}q^\frac83 \kappa_1^{8q-4}\kappa_2^4$, thus (\ref{3.23}) and (\ref{3.24}) becomes
\begin{eqnarray}
\label{new 3.28}2\mathcal{G}_1^2 &\le& l+\frac{1}{2^7}e^{8\lambda_1t}+\mathcal{G}_2^2,\\
\label{new 3.29}2\mathcal{G}_2^2 &\le& l'+\frac{1}{2^7}e^{8\lambda_2t}+\mathcal{G}_1^2.
\end{eqnarray}
Applying (\ref{new 3.29}) in (\ref{new 3.28}) and using the result in (\ref{new 3.29}) we deduce 
\begin{eqnarray}\label{new 3.30}
\begin{cases}
\mathcal{G}_1^2 \le \frac13 l'+\frac23 l+\Psi_1(t),\\
\mathcal{G}_2^2 \le \frac13 l +\frac23 l'+\Psi_2(t),
\end{cases}
\end{eqnarray}
where $\Psi_1(t)=\frac{3}{2^9}e^{8\lambda_1 t}+\frac{1}{3\cdot 2^7}e^{8\lambda_2t}$ and $\Psi_2(t)=\frac{1}{2^8}e^{8\lambda_1t}+\frac{1}{3\cdot 2^6}e^{8\lambda_2t}$.
Following \cite{SAZAMI}, $\tilde{\kappa}_1\le u$ and $\tilde{\kappa}_2\le v$ implies $\tilde{\kappa}_1^2\mathcal{H}_1^2\le u^2\mathcal{G}_1^2$ and $\tilde{\kappa}_2^2\mathcal{H}_2^2\le v^2\mathcal{G}_2^2$ respectively. Hence (\ref{new 3.30}) reduces to 
\begin{eqnarray}\label{new 3.31}
\begin{cases}
\mathcal{H}_1^2 \le \frac{\kappa_1^2}{\tilde{\kappa}_1^2}\left(\frac13 l'+\frac23 l+\Psi_1(t)\right),\\
\mathcal{H}_2^2 \le \frac{\kappa_2^2}{\tilde{\kappa}_2^2}\left(\frac13 l+\frac23 l'+\Psi_2(t)\right).
\end{cases}
\end{eqnarray}
Given that $\mathcal{H}_1=u|\nabla u|^2$, $\mathcal{H}_2=v|\nabla v|^2$ with $u=f^\frac13$ and $v=h^\frac13$, so (\ref{new 3.31}) becomes
\begin{eqnarray}\label{new 3.32}
\begin{cases}
\left(\frac19 \frac{|\nabla f|^2}{f}\right)^2 \le \frac{\kappa_1^2}{\tilde{\kappa}_1^2}\left(\frac13 l'+\frac23 l+\Psi_1(t)\right),\\
\left(\frac19 \frac{|\nabla h|^2}{h}\right)^2 \le \frac{\kappa_2^2}{\tilde{\kappa}_2^2}\left(\frac13 l+\frac23 l'+\Psi_2(t)\right).
\end{cases}
\end{eqnarray}
Applying the elementary inequality $\sqrt{x+y}\le\sqrt{x}+\sqrt{y}$ for positive $x,y$, in (\ref{new 3.32}) we get
\begin{eqnarray}\label{new 3.33}
	\begin{cases}
	\frac{|\nabla f|}{\sqrt{f}} \le 3\sqrt{\frac{\kappa_1}{\tilde{\kappa}_1}}\left(\sqrt[4]{\frac{l'}{3}}+\sqrt[4]{\frac{2l}{3}}+\sqrt[4]{\Psi_1(t)}\right),\\
	\frac{|\nabla h|}{\sqrt{h}} \le 3\sqrt{\frac{\kappa_2}{\tilde{\kappa}_2}}\left(\sqrt[4]{\frac{l}{3}}+\sqrt[4]{\frac{2l'}{3}}+\sqrt[4]{\Psi_2(t)}\right).
	\end{cases}
\end{eqnarray}
Using the definition of $\Psi_1(t),\Psi_2(t)$ on the above relations completes the proof.
\end{proof}
\noindent Now we derive global Hamilton type estimate using Theorem \ref{Theorem 3.1}.
\begin{corollary}[Global Hamilton type estimate]
	If $(f,h)$ is a positive solution to the system (\ref{heat eqn 1.1}) along the flow (\ref{flow 1.2}) satisfying $\tilde{\kappa}_1^3\le f\le \kappa_1^3$ and $\tilde{\kappa}_2^3\le h\le \kappa_2^3$ in $M\times [0,T]$ and $Ric_\phi\ge -(n-1)k_1g$ and $\mathcal{S}\ge -k_2 g$ on $M\times[0,T]$ then
	\begin{eqnarray}\label{new corr 3.34}
		\begin{cases}
		\frac{|\nabla f|}{\sqrt{f}} \le 3\sqrt{\frac{\kappa_1}{\tilde{\kappa}_1}}\left(\sqrt[4]{\frac{L'}{3}}+\sqrt[4]{\frac{2L}{3}}+\left(\frac{3}{2^9}e^{8\lambda_1 t}+\frac{1}{3\cdot 2^7}e^{8\lambda_2t}\right)^\frac14\right),\\
		\frac{|\nabla h|}{\sqrt{h}} \le 3\sqrt{\frac{\kappa_2}{\tilde{\kappa}_2}}\left(\sqrt[4]{\frac{L}{3}}+\sqrt[4]{\frac{2L'}{3}}+\left(\frac{1}{2^8}e^{8\lambda_1t}+\frac{1}{3\cdot 2^6}e^{8\lambda_2t}\right)^\frac14\right),
		\end{cases}
	\end{eqnarray}
	where $t\in [0,T]$,
		$L=\frac{1}{2}(\hat{\Omega}_1^*)^2+\frac{3}{2^\frac23}p^\frac83 \kappa_2^{8p-4}\kappa_1^4$ and $l'=\frac{1}{2}(\hat{\bar{\Omega}}_1^*)^2+\frac{3}{2^\frac23}q^\frac83 \kappa_1^{8q-4}\kappa_2^4$.
\end{corollary}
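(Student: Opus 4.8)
The plan is to derive the global estimate from the local one in Theorem~\ref{Theorem 3.1} by sending the cut-off radius $R\to\infty$. Fix an arbitrary point $(x,t)\in M\times[0,T]$ at which we want the bound, and for every $R>1$ build the domain $D_T(2R)$ and the cut-off $\eta$ of \eqref{eq eta} with base point $x_0=x_1=x$. Then $d_\tau(x,x)=0$ for all $\tau$, so $(x,t)\in D_T(R)\subset D_T(2R)$ and $\eta(x,\cdot)\equiv 1$, whence $\mathcal{G}_i(x,t)=\mathcal{H}_i(x,t)$. Since the curvature hypotheses $Ric_\phi\ge-(n-1)k_1g$, $\mathcal{S}\ge-k_2g$ and the two-sided bounds $\tilde\kappa_1^3\le f\le\kappa_1^3$, $\tilde\kappa_2^3\le h\le\kappa_2^3$ are now assumed on all of $M\times[0,T]$, they hold on each compact $D_T(2R)$, so Theorem~\ref{Theorem 3.1} applies for every $R>1$ and \eqref{Theorem 3.1 result} holds at $(x,t)$ with an $R$-dependent right-hand side.

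Next I would isolate the $R$-dependence of the constants $l,l'$. By the construction of $\psi$ in \cite{Li-Yau} and the choice in \cite{JSUN}, the numbers $c_0,c_1,c_2$ are absolute, independent of $R$; hence as $R\to\infty$ one has $\Omega=\frac{c_0}{R}(m-1)\bigl(\sqrt{k_1}+\frac2R\bigr)+\frac{3c_1}{R^2}+c_2k_2\longrightarrow c_2k_2$, so that $\Omega_1^*=\Omega\kappa_1^3+2((n-1)k_1+k_2)\kappa_1^3\to\hat{\Omega}_1^*$ and $\bar\Omega_1^*\to\hat{\bar{\Omega}}_1^*$, while $\Omega_2^*=\frac{4\sqrt{c_1}}{R}\kappa_1^{3/2}\to0$ and $\bar\Omega_2^*=\frac{4\sqrt{c_1}}{R}\kappa_2^{3/2}\to0$. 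The remaining summands $\frac{3}{2^{2/3}}p^{8/3}\kappa_2^{8p-4}\kappa_1^4$, $\frac{3}{2^{2/3}}q^{8/3}\kappa_1^{8q-4}\kappa_2^4$ and the functions $\Psi_1(t),\Psi_2(t)$ do not involve $R$. Therefore
\[
l=\frac{(\Omega_1^*)^2}{2}+\frac{27}{256}(\Omega_2^*)^4+\frac{3}{2^{2/3}}p^{8/3}\kappa_2^{8p-4}\kappa_1^4\ \longrightarrow\ \frac{(\hat{\Omega}_1^*)^2}{2}+\frac{3}{2^{2/3}}p^{8/3}\kappa_2^{8p-4}\kappa_1^4=L,
\]
and, in the same way, $l'\to L'$.

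Finally I would pass to the limit in the two inequalities of \eqref{Theorem 3.1 result} evaluated at the fixed $(x,t)$: using $l\to L$, $l'\to L'$ and the continuity of $s\mapsto s^{1/4}$, the right-hand sides converge to those of \eqref{new corr 3.34}, which is thus established at the arbitrary point $(x,t)\in M\times[0,T]$. I do not expect a genuine obstacle here; the one place needing care is the bookkeeping of the previous paragraph, namely checking that every constant buried in $\Omega$, $\Omega_2^*$ and $\bar\Omega_2^*$ is independent of $R$ so that the $O(1/R)$ contributions really disappear, and --- on a noncompact $M$ --- keeping the target point inside the region where $\eta\equiv1$, which is exactly why one centres the cut-off at that point. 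No analytic input beyond Theorem~\ref{Theorem 3.1} is needed.
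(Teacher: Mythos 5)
Your proposal is correct and follows exactly the same route as the paper, which simply states ``Taking $R\to +\infty$ in (\ref{Theorem 3.1 result}) we obtain (\ref{new corr 3.34}).'' You have merely filled in the bookkeeping that the paper leaves implicit: centering the cut-off at the target point so $\eta\equiv1$ there, noting $c_0,c_1,c_2$ are $R$-independent, and tracking $\Omega\to c_2k_2$, $\Omega_2^*,\bar\Omega_2^*\to0$, $\Omega_1^*\to\hat\Omega_1^*$, $\bar\Omega_1^*\to\hat{\bar\Omega}_1^*$, hence $l\to L$, $l'\to L'$.
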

\begin{proof}
	Taking $R\to +\infty$ in (\ref{Theorem 3.1 result})  we obtain (\ref{new corr 3.34}).
\end{proof}
\section{Souplet-Zhang type gradient estimation}
In this section we derive souplet-Zhang type estimation for positive solution of (\ref{heat eqn 1.1}) along (\ref{flow 1.2}). Thus throughout this section we consider $(f,h)=(e^u,e^v)$ as a positive solution to the system of equations (\ref{heat eqn 1.1}). Putting $f=e^u,\ h=e^v$ in (\ref{heat eqn 1.1}) we have
\begin{eqnarray}\label{2.1 Reduced Heat eq-1}
	\begin{cases}
		(\Delta_\phi -\partial_t)u = -|\nabla u|^2-e^{\lambda_1t+vp-u}\\
		(\Delta_\phi -\partial_t)v = -|\nabla v|^2-e^{\lambda_2t+uq-v}.
	\end{cases}
\end{eqnarray}
Let $\bar{u}=-e^{\lambda_1t+vp-u}$ and $\bar{v}=-e^{\lambda_2t+uq-v}$ then system (\ref{2.1 Reduced Heat eq-1}) reduces to 
\begin{eqnarray}\label{2.1 Reduced Heat eq-2}
	\begin{cases}
		(\Delta_\phi -\partial_t)u = -|\nabla u|^2+\bar{u}\\
		(\Delta_\phi -\partial_t)v = -|\nabla v|^2+\bar{v}.
	\end{cases}
\end{eqnarray}
Let $\kappa_1,\tilde{\kappa}_1,\kappa_2,\tilde{\kappa}_2$ be positive constants. Define $\rho_1=1+\kappa_1$ and $\rho_2=1+\kappa_2$. In this section we assume that a positive solution $(f,h)$ of the system (\ref{heat eqn 1.1}) satisfies $\tilde{\kappa}_1\le f\le \kappa_1$ and $\tilde{\kappa}_2\le h\le \kappa_2$. Hence a solution $(u,v)$ of (\ref{2.1 Reduced Heat eq-2}) satisfies $\ln\tilde{\kappa}_1\le u\le \ln\kappa_1$ and $\ln\tilde{\kappa}_2\le v\le \ln\kappa_2$. Let $x_0\in M$ and $R>0$ be any real number.
\begin{lemma}\label{Lemma 1 main}
	Let $(u,v)$ be a solution to the equation (\ref{2.1 Reduced Heat eq-2}). If there exist positive constants $k_1$, $k_2$ such that
	\begin{eqnarray*}
		Ric_\phi\ge -(n-1)k_1 g\text{ and } \mathcal{S}\ge -k_2 g
	\end{eqnarray*} 
	on $D_T(2R)$, then the function $\mathcal{W}_1 := \frac{|\nabla u|^2}{(\rho_1-u)^2}$ satisfies
	\begin{eqnarray}\label{W_1 eqn}
		\nonumber	(\Delta_\phi-\partial_t)\mathcal{W}_1 &\ge& 2\left(\frac{u-\ln\kappa_1}{\rho_1-u}\right)\langle\nabla \mathcal{W}_1,\nabla u\rangle+\frac{2|\nabla u|^4}{(\rho_1-u)^3}-\frac{2|\nabla u|^2}{(\rho_1-u)^3}\bar{u}\\
		\nonumber && -\frac{2\bar{u}}{(\rho_1-u)^2}\left(p\langle\nabla v,\nabla u\rangle-|\nabla u|^2\right)-\frac{2}{(\rho_1-u)^2}((n-1)k_1+k_2)|\nabla u|^2,\\
	\end{eqnarray}
	and the function $\mathcal{W}_2 := \frac{|\nabla v|^2}{(\rho_2-v)^2}$ satisfies
	\begin{eqnarray}\label{W_2 eqn}
		\nonumber	(\Delta_\phi-\partial_t)\mathcal{W}_2 &\ge& 2\left(\frac{v-\ln\kappa_2}{\rho_2-v}\right)\langle\nabla \mathcal{W}_2,\nabla v\rangle+\frac{2|\nabla v|^4}{(\rho_2-v)^3}-\frac{2|\nabla v|^2}{(\rho_2-v)^3}\bar{v}\\
		\nonumber && -\frac{2\bar{v}}{(\rho_2-v)^2}\left(q\langle\nabla u,\nabla v\rangle-|\nabla v|^2\right)-\frac{2}{(\rho_2-v)^2}(k_1+k_2)|\nabla v|^2.\\
	\end{eqnarray}
\end{lemma}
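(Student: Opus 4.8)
The plan is to derive the evolution inequality for $\mathcal{W}_1 = |\nabla u|^2/(\rho_1-u)^2$ directly, using the weighted Bochner formula and the evolution equation for $|\nabla u|^2$ from Lemma~\ref{lemma evol eqn}, and then the result for $\mathcal{W}_2$ follows by the symmetry of the system. Since $f = e^u$ and $\tilde\kappa_1 \le f \le \kappa_1$, the quantity $u$ satisfies $u \le \ln\kappa_1 < 1+\kappa_1 = \rho_1$, so $\rho_1 - u$ is bounded away from zero and all divisions below are legitimate. Write $w = \rho_1 - u$ for brevity in the calculation, so $\mathcal{W}_1 = |\nabla u|^2 w^{-2}$, $\nabla w = -\nabla u$, and $(\Delta_\phi - \partial_t) w = -(\Delta_\phi - \partial_t)u = |\nabla u|^2 - \bar u$ by \eqref{2.1 Reduced Heat eq-2}.

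First I would compute $\Delta_\phi \mathcal{W}_1$ by the product/quotient rule: $\Delta_\phi(|\nabla u|^2 w^{-2}) = w^{-2}\Delta_\phi|\nabla u|^2 + 2\langle \nabla|\nabla u|^2, \nabla(w^{-2})\rangle + |\nabla u|^2 \Delta_\phi(w^{-2})$, and similarly $\partial_t \mathcal{W}_1 = w^{-2}\partial_t|\nabla u|^2 + |\nabla u|^2\partial_t(w^{-2})$. For the $\Delta_\phi|\nabla u|^2$ term I apply the weighted Bochner formula (Lemma~\ref{lemma Bochner}) and bound $|\text{Hess }u|^2 \ge 0$ (or keep it — actually the cleanest route here drops it after using it against cross terms, but since the target inequality has a clean $2|\nabla u|^4/w^3$ term I expect one completes a square $|\text{Hess }u + \nabla u \otimes \nabla u /w|^2 \ge 0$ or simply discards $|\text{Hess }u|^2$ and handles the Hessian cross-term by rewriting it via $\nabla \mathcal{W}_1$); the Ricci term is controlled by $Ric_\phi \ge -(n-1)k_1 g$. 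For the $\partial_t|\nabla u|^2$ term I use Lemma~\ref{lemma evol eqn}: $\partial_t|\nabla u|^2 = -2\mathcal{S}(\nabla u, \nabla u) + 2\langle \nabla u, \nabla u_t\rangle$, and the tensor bound $\mathcal{S} \ge -k_2 g$ contributes the $((n-1)k_1 + k_2)$ term. Subtracting, the $\langle \nabla \Delta_\phi u, \nabla u\rangle$ and $\langle \nabla u, \nabla u_t\rangle$ combine into $\langle \nabla(\Delta_\phi - \partial_t)u, \nabla u\rangle = \langle \nabla(-|\nabla u|^2 + \bar u), \nabla u\rangle$, and here one differentiates $\bar u = -e^{\lambda_1 t + vp - u}$ to get $\nabla \bar u = -\bar u(p\nabla v - \nabla u)$, which produces the term $-\frac{2\bar u}{w^2}(p\langle\nabla v,\nabla u\rangle - |\nabla u|^2)$ after tracking the algebra.

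The term that requires care is the assembly of the gradient-of-$\mathcal{W}_1$ term: the cross terms $\langle\nabla|\nabla u|^2,\nabla(w^{-2})\rangle$, the $\langle\nabla|\nabla u|^2,\nabla u\rangle$ piece coming from $\langle\nabla(\Delta_\phi - \partial_t)u,\nabla u\rangle$, and the Hessian cross-term $\text{Hess }u(\nabla u,\nabla u)$ must all be reorganized. The key identity is $\nabla \mathcal{W}_1 = w^{-2}\nabla|\nabla u|^2 + 2|\nabla u|^2 w^{-3}\nabla u$, which lets one express $\langle\nabla|\nabla u|^2,\nabla u\rangle$ in terms of $\langle\nabla\mathcal{W}_1,\nabla u\rangle$. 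The coefficient $2\big(\frac{u - \ln\kappa_1}{\rho_1 - u}\big)$ in front of $\langle\nabla\mathcal{W}_1,\nabla u\rangle$ is the fingerprint of this substitution combined with rewriting $\rho_1 - u = (1 + \ln\kappa_1 - \ln\kappa_1 + \kappa_1) - u$; more precisely, one splits a factor so that $w^{-1}$ times the leftover numerator $u - \ln\kappa_1$ appears — this is exactly the standard Souplet–Zhang bookkeeping where the shift $\rho_1 = 1 + \kappa_1$ (rather than just $\kappa_1$ or $1$) is chosen precisely to make the logarithmic-barrier argument in the subsequent theorem work, while here it only affects how the coefficient is named. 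I expect the main obstacle to be keeping the $w^{-2}$ versus $w^{-3}$ powers and the various factors of $2$ straight while folding four or five cross-terms into the single $\langle\nabla\mathcal{W}_1,\nabla u\rangle$ term; everything else (the Ricci bound, the $\mathcal{S}$ bound, the differentiation of $\bar u$) is routine.

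Finally, \eqref{W_2 eqn} is obtained from \eqref{W_1 eqn} by interchanging the roles $u \leftrightarrow v$, $p \leftrightarrow q$, $\lambda_1 \leftrightarrow \lambda_2$, $\rho_1 \leftrightarrow \rho_2$, $\kappa_1 \leftrightarrow \kappa_2$, $\bar u \leftrightarrow \bar v$, since the system \eqref{2.1 Reduced Heat eq-2} is symmetric under this exchange; I would simply invoke this symmetry rather than repeat the computation. (One minor note: the $\mathcal{W}_2$ inequality as displayed has coefficient $(k_1 + k_2)$ rather than $((n-1)k_1 + k_2)$ in the last term, which appears to be a typographical slip inherited from the symmetry step — the honest bound carries $(n-1)k_1 + k_2$ exactly as in \eqref{W_1 eqn}.)
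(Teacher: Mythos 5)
Your plan mirrors the paper's own derivation step for step: weighted Bochner formula for $\Delta_\phi\mathcal{W}_1$, Lemma~\ref{lemma evol eqn} for $\partial_t\mathcal{W}_1$, the nonnegative square $\frac{1}{(\rho_1-u)^2}\bigl|\text{Hess }u+\frac{\nabla u\otimes\nabla u}{\rho_1-u}\bigr|^2$ to absorb the Hessian terms (you have the correct exponent inside the square; the paper's display writes $(\rho_1-u)^2$ there, a misprint), the identity $\langle\nabla\mathcal{W}_1,\nabla u\rangle=\frac{2\text{Hess }u(\nabla u,\nabla u)}{(\rho_1-u)^2}+\frac{2|\nabla u|^4}{(\rho_1-u)^3}$, the differentiation $\nabla\bar u=-\bar u(p\nabla v-\nabla u)$, the curvature bounds, and symmetry for $\mathcal{W}_2$. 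Your side remarks are also right---the $(k_1+k_2)$ in (\ref{W_2 eqn}) should be $((n-1)k_1+k_2)$---but note that the coefficient $\frac{u-\ln\kappa_1}{\rho_1-u}$ arises simply from $\frac{2}{\rho_1-u}-2$ and forces $\rho_1=1+\ln\kappa_1$, so the stated definition $\rho_1=1+\kappa_1$ is itself a misprint rather than the factor-splitting device you describe.
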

\begin{proof}
	Using weighted Bochner formula (Lemma \ref{lemma Bochner}) we have
	\begin{eqnarray}\label{SZ 3.5}
		\nonumber \Delta_\phi\mathcal{W}_1 &=&\frac{6|\nabla u|^4}{(\rho_1-u)^4}+\frac{2|\nabla u|^2}{(\rho_1-u)^3}\Delta_\phi u+\frac{4}{(\rho_1-u)^3}\langle\nabla|\nabla u|^2,\nabla u\rangle\\
		&&+\frac{2}{(\rho_1-u)^2}\left(|\text{Hess }u|^2+\langle\nabla\Delta_\phi u,\nabla u\rangle+Ric_\phi(\nabla u,\nabla u)\right).
	\end{eqnarray}
	By Lemma \ref{lemma evol eqn} we find
	\begin{eqnarray}\label{SZ 3.6}
		\partial_t \mathcal{W}_1 &=& -\frac{2}{(\rho_1-u)^2}\mathcal{S}(\nabla u,\nabla u)+\frac{2}{(\rho_1-u)^2}\langle\nabla u_t,\nabla u\rangle+\frac{2|\nabla u|^2}{(\rho_1-u)^3}u_t.
	\end{eqnarray}
	Subtracting (\ref{SZ 3.6}) from (\ref{SZ 3.5}) then applying (\ref{2.1 Reduced Heat eq-2}) we get
	\begin{eqnarray}\label{SZ 3.7}
		\nonumber (\Delta_\phi-\partial_t)\mathcal{W}_1 &=& \frac{6|\nabla u|^4}{(\rho_1-u)^4}-\frac{2|\nabla u|^4}{(\rho_1-u)^3}+\frac{2|\nabla u|^2}{(\rho_1-u)^3}\bar{u}+\frac{8\text{Hess }u(\nabla u,\nabla u)}{(\rho_1-u)^3}\\
		\nonumber &&+\frac{2|\text{Hess }u|^2}{(\rho_1-u)^2}+\frac{2}{(\rho_1-u)^2}(Ric_\phi+\mathcal{S})(\nabla u,\nabla u)\\
		&&-\frac{4\text{Hess }u(\nabla u,\nabla u)}{(\rho_1-u)^2}+\frac{2}{(\rho_1-u)^2}\langle\nabla \bar{u},\nabla u\rangle.
	\end{eqnarray}
	Note that
	\begin{eqnarray}
		\nonumber	\frac{|\text{Hess }u|^2}{(\rho_1-u)^2}+\frac{2\text{Hess }u(\nabla u,\nabla u)}{(\rho_1-u)^3}+\frac{|\nabla u|^4}{(\rho_1-u)^4}=\frac{1}{(\rho_1-u)^2}\left|\text{Hess }u+\frac{\nabla u\otimes \nabla u}{(\rho_1-u)^2}\right|^2\ge 0
	\end{eqnarray}
	and
	\begin{eqnarray}
		\nonumber \frac{2\text{Hess }u(\nabla u,\nabla u)}{(\rho_1-u)^2}+\frac{2|\nabla u|^4}{(\rho_1-u)^3}&=&\langle\nabla\mathcal{W}_1,\nabla u\rangle.
	\end{eqnarray}
	Hence (\ref{SZ 3.7}) reduces to 
	\begin{eqnarray}
		\nonumber (\Delta_\phi-\partial_t)\mathcal{W}_1 &\ge& \frac{2\langle\nabla\mathcal{W}_1,\nabla u\rangle}{\rho_1-u}-2\langle\nabla\mathcal{W}_1,\nabla u\rangle+\frac{2|\nabla u|^4}{(\rho_1-u)^3}-\frac{2|\nabla u|^2\bar{u}}{(\rho_1-u)^3}\\
		&& -\frac{2}{(\rho_1-u)^2}\langle\nabla \bar{u},\nabla u\rangle+\frac{2}{(\rho_1-u)^2}(Ric_\phi+\mathcal{S})(\nabla u,\nabla u)
	\end{eqnarray}
	or,
	\begin{eqnarray}\label{SZ 3.9}
		\nonumber (\Delta_\phi-\partial_t)\mathcal{W}_1 &\ge& 2\langle\nabla\mathcal{W}_1,\nabla u\rangle\left(\frac{u-\ln\kappa_1}{\rho_1-u}\right)+\frac{2|\nabla u|^4}{(\rho_1-u)^3}-\frac{2|\nabla u|^2\bar{u}}{(\rho_1-u)^3}\\
		\nonumber && -\frac{2\bar{u}}{(\rho_1-u)^2}(p\langle\nabla v,\nabla u\rangle-|\nabla u|^2)+\frac{2}{(\rho_1-u)^2}(Ric_\phi+\mathcal{S})(\nabla u,\nabla u).\\
	\end{eqnarray}
	We apply the bounds of $Ric_\phi$ and $\mathcal{S}$ in the above equation to find (\ref{W_1 eqn}). \\
	\\
	\noindent Using the symmetry in (\ref{2.1 Reduced Heat eq-2}) and applying similar method in (\ref{W_1 eqn}) we obtain (\ref{W_2 eqn}). This completes the proof.
\end{proof}
\begin{theorem}\label{Theorem 4.1}
	Let $(f,h)=(e^u,e^v)$ be a solution to the equation (\ref{heat eqn 1.1}). If there exist positive constants $k_1$, $k_2$ such that
	\begin{eqnarray*}
		Ric_\phi\ge -(n-1)k_1 g\text{ and } \mathcal{S}\ge -k_2 g
	\end{eqnarray*} 
	on $D_T(2R)$, then
	\begin{eqnarray}\label{new SZ Theorem 4.1}
	\begin{cases}
	\frac{|\nabla f|}{f} \le \left(1+\ln(\frac{\kappa_1}{\tilde{\kappa}_1})\right)\left(\frac{20^\frac14}{\sqrt 3}b+\frac{2}{\sqrt3}b'\right),\\
	\frac{|\nabla h|}{h} \le \left(1+\ln(\frac{\kappa_2}{\tilde{\kappa}_2})\right)\left(\frac{2}{\sqrt3}b+\frac{20^\frac14}{\sqrt2}b'\right),
	\end{cases}
	\end{eqnarray}
	where $b=\sqrt{\Lambda_1}+\frac{27^\frac14}{\sqrt2}\Lambda_2+\sqrt{p\bar{u}^*}$ and $b'=\sqrt{\bar{\Lambda}_1}+\frac{27^\frac14}{\sqrt2}\Lambda_2+ \sqrt{q\bar{v}^*}$ with
	\begin{enumerate}
	\item[] $\Lambda_1=\frac{c_0}{R}(n-1)(\sqrt{k_1}+\frac2R+\frac{3c_1}{R^2}+c_2k_2)+4\bar{u}^*+2((n-1)k_1+k_2)$,
	\item[] $\Lambda_2=2\ln\left(\frac{\kappa_1}{\tilde{\kappa}_1}\right)\frac{\sqrt{c_1}}{R}$,
	\item[] $\bar{\Lambda}_1=\frac{c_0}{R}(n-1)(\sqrt{k_1}+\frac2R+\frac{3c_1}{R^2}+c_2k_2)+4\bar{v}^*+2((n-1)k_1+k_2)$,
	\item[] $\bar{u}^*=e^{\lambda_1 t+p\tilde{\kappa}_2-\kappa_1}$, 
	\item[] $\bar{v}^*=e^{\lambda_2 t+q\tilde{\kappa}_1-\kappa_2}$ and $t\in[0,T]$.
	\end{enumerate}
\end{theorem}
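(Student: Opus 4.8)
The plan is to follow the localization-and-maximum-principle scheme already used for Theorem \ref{Theorem 3.1}, now applied to the Souplet--Zhang quantities $\mathcal{W}_1,\mathcal{W}_2$ of Lemma \ref{Lemma 1 main}. Fix $T_1\in(0,T]$, let $\eta$ be the cut-off function of (\ref{eq eta}) centred at an arbitrary point, and set $\mathcal{G}_1=\eta\mathcal{W}_1$, $\mathcal{G}_2=\eta\mathcal{W}_2$. As in the proof of Theorem \ref{Theorem 3.1}, assume $\mathcal{G}_1,\mathcal{G}_2$ attain their maxima over $D_{T_1}(2R)$ at a point $(x_1,t_1)$ with $\mathcal{G}_1,\mathcal{G}_2\ge0$; there $\nabla\mathcal{G}_1=0$, $\Delta\mathcal{G}_1\le0$, $\partial_t\mathcal{G}_1\ge0$, so $0\ge(\Delta_\phi-\partial_t)\mathcal{G}_1=\mathcal{W}_1(\Delta_\phi-\partial_t)\eta+\eta(\Delta_\phi-\partial_t)\mathcal{W}_1+2\langle\nabla\eta,\nabla\mathcal{W}_1\rangle$, and likewise for $\mathcal{G}_2$. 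Using $\nabla\mathcal{G}_1=0$ to write $\eta\nabla\mathcal{W}_1=-\mathcal{W}_1\nabla\eta$, the generalized Laplacian comparison bounds on $\Delta_\phi\eta$ and on $|\nabla\eta|^2/\eta$, Calabi's argument \cite{CALABI} for smoothness, and the estimate $-\mathcal{W}_1\eta_t\ge-c_2k_2\mathcal{W}_1$ from \cite{JSUN}, the first and third terms above are bounded below by a negative multiple of $\mathcal{W}_1$ of order $\tfrac{c_0}{R}(n-1)(\sqrt{k_1}+\tfrac2R)+\tfrac{c_1}{R^2}+c_2k_2$.

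Next I would substitute the differential inequality (\ref{W_1 eqn}) for $(\Delta_\phi-\partial_t)\mathcal{W}_1$ and use the standing bounds $\ln\tilde\kappa_1\le u\le\ln\kappa_1$ and $\ln\tilde\kappa_2\le v\le\ln\kappa_2$. These give $1\le\rho_1-u\le1+\ln(\kappa_1/\tilde\kappa_1)$ and $|\nabla u|=(\rho_1-u)\sqrt{\mathcal{W}_1}$, so the leading quartic term $\tfrac{2|\nabla u|^4}{(\rho_1-u)^3}=2(\rho_1-u)\mathcal{W}_1^2\ge2\mathcal{W}_1^2$ survives, and after multiplying the whole inequality by $\eta$ and using $0\le\eta\le1$ it dominates $2\mathcal{G}_1^2$. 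Since $u\le\ln\kappa_1$ the drift coefficient $\tfrac{u-\ln\kappa_1}{\rho_1-u}$ is $\le0$, and together with $\nabla\mathcal{G}_1=0$ the term $2\big(\tfrac{u-\ln\kappa_1}{\rho_1-u}\big)\langle\nabla\mathcal{W}_1,\nabla u\rangle$ (times $\eta$) is bounded in modulus by $\Lambda_2\mathcal{G}_1^{3/2}$ with $\Lambda_2=2\ln(\kappa_1/\tilde\kappa_1)\sqrt{c_1}/R$. The term $-\tfrac{2|\nabla u|^2}{(\rho_1-u)^3}\bar u\ge0$ is discarded, while the remaining $\bar u$-terms --- the linear one and the cross term $-\tfrac{2p\bar u}{(\rho_1-u)^2}\langle\nabla v,\nabla u\rangle$ --- are controlled by $|\bar u|\le\bar u^*$ and Cauchy--Schwarz, contributing a multiple of $\bar u^*\mathcal{G}_1$ plus $2p\bar u^*\eta\sqrt{\mathcal{G}_1\mathcal{G}_2}$. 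Collecting everything yields a quadratic inequality of the form $2\mathcal{G}_1^2\le\Lambda_1\mathcal{G}_1+\Lambda_2\mathcal{G}_1^{3/2}+2p\bar u^*\eta\sqrt{\mathcal{G}_1\mathcal{G}_2}$, and symmetrically $2\mathcal{G}_2^2\le\bar\Lambda_1\mathcal{G}_2+\Lambda_2\mathcal{G}_2^{3/2}+2q\bar v^*\eta\sqrt{\mathcal{G}_1\mathcal{G}_2}$.

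The remaining step mirrors (\ref{3.20})--(\ref{new 3.30}): apply Young's inequality to $\Lambda_1\mathcal{G}_1$, $\Lambda_2\mathcal{G}_1^{3/2}$ and $p\bar u^*\sqrt{\mathcal{G}_1\mathcal{G}_2}$ (and their barred counterparts), choosing the split parameters so that the pieces falling on $\mathcal{G}_1^2$ are absorbed and the rest form the constants $\sqrt{\Lambda_1}$, $\Lambda_2$, $\sqrt{p\bar u^*}$ out of which $b$ (respectively $b'$) is built. This produces a coupled pair $a\mathcal{G}_1^2\le b_1+\mathcal{G}_2^2$, $a\mathcal{G}_2^2\le b_2+\mathcal{G}_1^2$ for some $a>1$; solving it by back-substitution as in (\ref{new 3.28})--(\ref{new 3.30}) bounds $\mathcal{G}_1,\mathcal{G}_2$ at $(x_1,t_1)$, hence everywhere on $D_{T_1}(2R)$ by maximality. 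Since the centre of $\eta$ and $T_1\in(0,T]$ were arbitrary, restricting to the region where $\eta\equiv1$ bounds $\mathcal{W}_1,\mathcal{W}_2$ throughout $D_T(2R)$; finally $|\nabla f|/f=|\nabla u|\le(1+\ln(\kappa_1/\tilde\kappa_1))\sqrt{\mathcal{W}_1}$ (and the analogue for $h$), combined with $\sqrt{x+y+z}\le\sqrt{x}+\sqrt{y}+\sqrt{z}$, gives (\ref{new SZ Theorem 4.1}).

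I expect the main obstacle to be the Young's-inequality bookkeeping: the split parameters must be chosen so that, after all cross and coupling terms are absorbed, the net coefficient of $\mathcal{G}_1^2$ on the left stays strictly positive and equals the value of $a$ for which the decoupled system reproduces the constants $\frac{20^{1/4}}{\sqrt{3}}$, $\frac{2}{\sqrt{3}}$, $\frac{20^{1/4}}{\sqrt{2}}$ appearing in (\ref{new SZ Theorem 4.1}). The genuinely new technical point compared with the Hamilton estimate of Theorem \ref{Theorem 3.1} is tracking the extra powers of $\rho_1-u$ and $\rho_2-v$ carried by $\mathcal{W}_1,\mathcal{W}_2$ and verifying that they only help --- in particular that $\rho_i-u\ge1$ preserves the quartic term and that the drift coefficient has the favourable sign on $[\ln\tilde\kappa_1,\ln\kappa_1]$.
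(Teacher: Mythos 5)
Your plan reproduces the paper's proof essentially verbatim: localize with the cut-off $\eta$, apply the maximum principle to $\eta\mathcal{W}_1$ and $\eta\mathcal{W}_2$, substitute the differential inequalities of Lemma~\ref{Lemma 1 main}, use $\rho_i-\cdot\ge1$ together with Cauchy--Schwarz on the cross term, then apply Young's inequality to reach the coupled pair $\tfrac54 G_1^2\le\Lambda_1^2+\tfrac{27}{4}\Lambda_2^4+p^2(\bar u^*)^2+G_2^2$ and its mirror, and finally decouple by back-substitution. The one point worth flagging is that your step ``controlled by $|\bar u|\le\bar u^*$'' uses $\bar u^*$ as an \emph{upper} bound on $|\bar u|$ (which is indeed what the absorption of $2\bar u\eta G_1$ and of the cross term requires), whereas the formula $\bar u^*=e^{\lambda_1 t+p\tilde\kappa_2-\kappa_1}$ built from $v\ge\ln\tilde\kappa_2$, $u\le\ln\kappa_1$ is the \emph{minimum} of $|\bar u|$; this is a slip present in the paper itself rather than a gap in your reasoning, but if you carry the proof out in full you should replace it by the genuine supremum $e^{\lambda_1 t}\kappa_2^{p}/\tilde\kappa_1$.
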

\begin{proof}
	Consider $\psi$ and $\eta$ as in (\ref{eq psi defn}), (\ref{eq eta}). Let $G_1=\eta\mathcal{W}_1$ and $G_2=\eta \mathcal{W}_2$. Fix $T_2\in(0,T]$ and assume $G_i$ achieves maximum at $(x_0,t_0)\in D_{T_2}(2R)$ with $G_i(x_0,t_0)\ge0$ (if $G_i(x_0,t_0)\le 0$ then the proof is trivial) for $i=1,2$.\\
	\noindent Hence at $(x_0,t_0)$ we have
	\begin{eqnarray}
		\label{SZ 3.10}	\nabla G_1 =0,\ \Delta G_1 \le 0,\ \partial_tG_1\ge 0,\\
		\label{SZ 3.11}	\nabla G_2 =0,\ \Delta G_2 \le 0,\ \partial_tG_2\ge 0.
	\end{eqnarray}
	Therefore,
	\begin{eqnarray}
		\label{SZ 3.12}\nabla\mathcal{W}_1=-\frac{\mathcal{W}_1}{\eta}\nabla\eta,\\
		\label{SZ 3.13}\nabla\mathcal{W}_2=-\frac{\mathcal{W}_2}{\eta}\nabla\eta
	\end{eqnarray}
	and
	\begin{eqnarray}
		\label{SZ 3.14}	0\ge (\Delta_\phi -\partial_t)G_1 =\mathcal{W}_1(\Delta_\phi -\partial_t)\eta+\eta(\Delta_\phi -\partial_t)\mathcal{W}_1+2\langle\nabla\mathcal{W}_1,\nabla \eta\rangle,\\
		\label{SZ 3.15}	0\ge (\Delta_\phi -\partial_t)G_2 =\mathcal{W}_2(\Delta_\phi -\partial_t)\eta+\eta(\Delta_\phi -\partial_t)\mathcal{W}_2+2\langle\nabla\mathcal{W}_2,\nabla \eta\rangle.
	\end{eqnarray}
	By \cite{JSUN}, there is a constant $c_2$ such that
	\begin{eqnarray}
		\label{SZ 3.16} -\mathcal{W}_1\eta_t\ge -c_2k_2\mathcal{W}_1,\\
		\label{SZ 3.17} -\mathcal{W}_2\eta_t\ge -c_2k_2\mathcal{W}_2.
	\end{eqnarray}
	For ease of calculation we now proceed with (\ref{SZ 3.10}), (\ref{SZ 3.12}), (\ref{SZ 3.14}) and (\ref{SZ 3.16}). Using generalized Laplacian comparison theorem, (\ref{SZ 3.12}) and (\ref{SZ 3.16}) in (\ref{SZ 3.14}) we get
	\begin{eqnarray}
		\label{SZ 3.18}0 &\ge& -\Omega \mathcal{W}_1+\eta(\Delta_\phi-\partial_t)\mathcal{W}_1,
	\end{eqnarray}
	where $\Omega=\frac{c_0}{R}(n-1)(\sqrt{k_1}+\frac2R+\frac{3c_1}{R^2}+c_2k_2)$.\\
	Using (\ref{W_1 eqn}) from Lemma \ref{Lemma 1 main} in (\ref{SZ 3.18}) we obtain
	\begin{eqnarray}
		\nonumber\label{SZ 3.19}0 &\ge& -\Omega \mathcal{W}_1+2\eta\left(\frac{u-\ln\kappa_1}{\rho_1-u}\right)\langle\nabla \mathcal{W}_1,\nabla u\rangle+\frac{2\eta|\nabla u|^4}{(\rho_1-u)^3}-\frac{2\eta|\nabla u|^2}{(\rho_1-u)^3}\bar{u}\\
		\nonumber&& -\frac{2\eta\bar{u}}{(\rho_1-u)^2}\left(p\langle\nabla v,\nabla u\rangle-|\nabla u|^2\right)-\frac{2\eta}{(\rho_1-u)^2}((n-1)k_1+k_2)|\nabla u|^2.\\
	\end{eqnarray}
	Multiplying (\ref{SZ 3.19}) with $\eta$ we get
	\begin{eqnarray}
		\nonumber\label{SZ 3.20}0 &\ge& -\Omega G_1-2\ln\left(\frac{\kappa_1}{\tilde{\kappa}_1}\right)\frac{\sqrt{c_1}}{R}G_1^\frac32+2(\rho_1-u)G_1^2-\frac{2G_1}{\rho_1-u}\bar{u}\\
		\nonumber&& -2\bar{u}\eta^2p\frac{|\nabla v||\nabla u|}{(\rho_1-u)^2}+2\bar{u}\eta^2\frac{|\nabla u|^2}{(\rho_1-u)^2}-2G_1((n-1)k_1+k_2).\\
	\end{eqnarray}
	We see that $\rho_1-u\ge 1$, $\rho_2-v\ge 1$, $\bar{u}\le -e^{\lambda_1 t+p\tilde{\kappa}_2-\kappa_1}$ and $\eta=1$ on $D_{T_2}(2R)$. Thus (\ref{SZ 3.20}) becomes
	\begin{eqnarray}
		\label{SZ 3.21}0 &\ge& 2G_1^2-\Lambda_1 G_1-\Lambda_2 G_1^\frac32-\Lambda_3\sqrt{G_1}.
	\end{eqnarray}
	where 
	$\Lambda_1=\Omega+4e^{\lambda_1 t+p\tilde{\kappa}_2-\kappa_1}+2((n-1)k_1+k_2)$, $\Lambda_2=2\ln\left(\frac{\kappa_1}{\tilde{\kappa}_1}\right)\frac{\sqrt{c_1}}{R}$, $\Lambda_3=2e^{\lambda_1 t+p\tilde{\kappa}_2-\kappa_1}\sqrt{G_2}$.
By using Young's inequality in each terms of (\ref{SZ 3.21}) we obtain the following results.
\begin{eqnarray}
	\label{Young-1} \Lambda_1 G_1 &\le& \Lambda_1^2 +\frac{G_1^2}{4},\\
	\label{Young-2} \Lambda_2G_1^\frac32 &\le& \frac{27}{4}\Lambda_2^4+\frac{G_1^2}{4},\\ 
	\label{Young-3} \Lambda_3G_1^\frac12 &\le& \frac34 \Lambda_3^\frac43 + \frac{G_1^2}{4}.
\end{eqnarray}
Using (\ref{Young-1}), (\ref{Young-2}) and (\ref{Young-3}) in (\ref{SZ 3.21}) we get
\begin{eqnarray}
	\frac{5}{4}G_1^2 &\le& \Lambda_1^2+\frac{27}{4}\Lambda_2^4+\frac34 \Lambda_3^\frac43.
\end{eqnarray}
Applying Young's inequality on $\frac34 \Lambda_3^\frac43$ we have
\begin{eqnarray}
	\label{SZ 3.26} \frac{5}{4}G_1^2 &\le& \Lambda_1^2+\frac{27}{4}\Lambda_2^4+ p^2 (\bar{u}^*)^2+G_2^2,
\end{eqnarray}
where $\bar{u}^*=e^{\lambda_1 t+p\tilde{\kappa}_2-\kappa_1}$.
\\
Similarly from (\ref{SZ 3.11}), (\ref{SZ 3.13}), (\ref{SZ 3.15}) and (\ref{SZ 3.17}), we obtain
\begin{eqnarray}
	\label{SZ 3.27} \frac{5}{4}G_2^2 &\le& \bar{\Lambda}_1^2+\frac{27}{4}\Lambda_2^4+ q^2 (\bar{v}^*)^2+G_1^2,
\end{eqnarray}
where $\bar{\Lambda}_1=\Omega+4e^{\lambda_2t+q\tilde{\kappa}_1-\kappa_2}+2((n-1)k_1+k_2)$ and $\bar{v}^*=e^{\lambda_2t+q\tilde{\kappa}_1-\kappa_2}$.\\
Using (\ref{SZ 3.27}) in (\ref{SZ 3.26}) we conclude 
\begin{eqnarray}
	\label{SZ 3.28} G_1^2 &\le& \frac{20}{9}\left(\Lambda_1^2+\frac{27}{4}\Lambda_2^4+ p^2 (\bar{u}^*)^2\right)+\frac{16}{9}\left(\bar{\Lambda}_1^2+\frac{27}{4}\Lambda_2^4+ q^2 (\bar{v}^*)^2\right)
\end{eqnarray}
and using (\ref{SZ 3.28}) in (\ref{SZ 3.27}) we deduce
\begin{eqnarray}
	\label{SZ 3.29} G_2^2 &\le& \frac{16}{9}\left(\Lambda_1^2+\frac{27}{4}\Lambda_2^4+ p^2 (\bar{u}^*)^2\right)+\frac{20}{9}\left(\bar{\Lambda}_1^2+\frac{27}{4}\Lambda_2^4+ q^2 (\bar{v}^*)^2\right).
\end{eqnarray}
Since $\eta=1$ on $D_{T}(2R)$, so putting $u=\ln f$ in (\ref{SZ 3.28}), $v=\ln h$ in (\ref{SZ 3.29}) we get
\begin{eqnarray}
	\nonumber\label{SZ 3.30}\left(\frac{|\nabla f|^2}{f^2(\rho_1-\ln f)}\right)^2 &\le& \frac{20}{9}\left(\Lambda_1^2+\frac{27}{4}\Lambda_2^4+ p^2 (\bar{u}^*)^2\right)+\frac{16}{9}\left(\bar{\Lambda}_1^2+\frac{27}{4}\Lambda_2^4+ q^2 (\bar{v}^*)^2\right),\\
	\\
	\nonumber\label{SZ 3.31}\left(\frac{|\nabla h|^2}{h^2(\rho_2-\ln h)}\right)^2 &\le& \frac{16}{9}\left(\Lambda_1^2+\frac{27}{4}\Lambda_2^4+ p^2 (\bar{u}^*)^2\right)+\frac{20}{9}\left(\bar{\Lambda}_1^2+\frac{27}{4}\Lambda_2^4+ q^2 (\bar{v}^*)^2\right).\\
\end{eqnarray}
Applying the inequality $\sqrt{x+y}\le\sqrt{x}+\sqrt{y}$, for positive $x,y$ twice in each of the above equations and using the fact that $\rho_1-\ln f\le 1+\ln\left(\frac{\kappa_1}{\tilde{\kappa}_1}\right)$, $\rho_2-\ln h\le 1+\ln\left(\frac{\kappa_2}{\tilde{\kappa}_2}\right)$ we obtain
\begin{eqnarray}
		\label{SZ 3.32}\frac{|\nabla f|}{f} &\le& \left(1+\ln(\frac{\kappa_1}{\tilde{\kappa}_1})\right)\left(\frac{20^\frac14}{\sqrt 3}b+\frac{2}{\sqrt3}b'\right),\\
		\label{SZ 3.33}\frac{|\nabla h|}{h} &\le& \left(1+\ln(\frac{\kappa_2}{\tilde{\kappa}_2})\right)\left(\frac{2}{\sqrt3}b+\frac{20^\frac14}{\sqrt2}b'\right),
\end{eqnarray}
where $b=\sqrt{\Lambda_1}+\frac{27^\frac14}{\sqrt2}\Lambda_2+\sqrt{p\bar{u}^*}$ and $b'=\sqrt{\bar{\Lambda}_1}+\frac{27^\frac14}{\sqrt2}\Lambda_2+ \sqrt{q\bar{v}^*}$. This completes the proof.
\end{proof}
\noindent As a result we have the global gradient estimate for (\ref{heat eqn 1.1}) along (\ref{flow 1.2}) on $M$.
\begin{corollary}[Global Souplet-Zhang type estimate]
If $(f,h)$ is a positive solution to the system (\ref{heat eqn 1.1}) along the flow (\ref{flow 1.2}) satisfying $\tilde{\kappa}_1^3\le f\le \kappa_1^3$ and $\tilde{\kappa}_2^3\le h\le \kappa_2^3$ in $M\times [0,T]$ and $Ric_\phi\ge -(n-1)k_1g$ and $\mathcal{S}\ge -k_2 g$ on $M\times[0,T]$ then
\begin{eqnarray}\label{new SZ corollary 4.1}
\begin{cases}
\frac{|\nabla f|}{f} \le \left(1+\ln(\frac{\kappa_1}{\tilde{\kappa}_1})\right)\left(\frac{20^\frac14}{\sqrt 3}B+\frac{2}{\sqrt3}B'\right),\\
\frac{|\nabla h|}{h} \le \left(1+\ln(\frac{\kappa_2}{\tilde{\kappa}_2})\right)\left(\frac{2}{\sqrt3}B+\frac{20^\frac14}{\sqrt2}B'\right),
\end{cases}
\end{eqnarray}
where $B=\sqrt{\hat{\Lambda}_1}+\sqrt{p\bar{u}^*}$ and $B'=\sqrt{\hat{\bar{\Lambda}}_1}+ \sqrt{q\bar{v}^*}$ with
\begin{enumerate}
	\item[] $\hat{\Lambda}_1=c_2k_2+4\bar{u}^*+2((n-1)k_1+k_2)$,
	\item[] $\hat{\bar{\Lambda}}_1=c_2k_2+4\bar{v}^*+2((n-1)k_1+k_2)$,
	\item[] $\bar{u}^*=e^{\lambda_1 t+p\tilde{\kappa}_2-\kappa_1}$, 
	\item[] $\bar{v}^*=e^{\lambda_2 t+q\tilde{\kappa}_1-\kappa_2}$ and $t\in[0,T]$.
\end{enumerate}
\end{corollary}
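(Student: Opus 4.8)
\emph{Proof proposal.} The plan is to derive the global estimate simply as the limit $R\to+\infty$ of the local estimate established in Theorem \ref{Theorem 4.1}, exactly in the spirit of the proof of the preceding (global Hamilton) corollary. First I would fix an arbitrary point $(y,t)\in M\times[0,T]$ and observe that, under the now global hypotheses $\tilde\kappa_i\le\cdot\le\kappa_i$, $Ric_\phi\ge-(n-1)k_1g$, $\mathcal S\ge-k_2g$ on all of $M\times[0,T]$, Theorem \ref{Theorem 4.1} applies verbatim on every $D_T(2R)$ with $R>1$. Since the cut-off $\eta$ satisfies $\eta\equiv1$ on the ball of radius $R$ about the center $x_0$, for all $R$ large enough the point $y$ lies in that region, so the two inequalities of (\ref{new SZ Theorem 4.1}) hold at $(y,t)$; it then remains to let $R\to+\infty$ and read off the limiting constants.

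The heart of the argument is thus the bookkeeping of the $R$-dependence of $\Lambda_1,\Lambda_2,\bar\Lambda_1$. The quantity $\Lambda_2=2\ln(\kappa_1/\tilde\kappa_1)\sqrt{c_1}/R$ carries an explicit factor $1/R$, hence $\Lambda_2\to0$; this kills both the $\frac{27^{1/4}}{\sqrt2}\Lambda_2$ terms appearing in $b$ and $b'$. In $\Lambda_1$ and $\bar\Lambda_1$ the piece $\Omega$ is, up to the constant $c_2k_2$ coming from the time-derivative bound $-\mathcal W_i\eta_t\ge-c_2k_2\mathcal W_i$ of \cite{JSUN}, a sum of terms each carrying a positive power of $1/R$; hence $\Omega\to c_2k_2$, while the remaining contributions $4\bar u^*+2((n-1)k_1+k_2)$ and $4\bar v^*+2((n-1)k_1+k_2)$ are independent of $R$. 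Therefore $\Lambda_1\to\hat\Lambda_1=c_2k_2+4\bar u^*+2((n-1)k_1+k_2)$ and $\bar\Lambda_1\to\hat{\bar\Lambda}_1=c_2k_2+4\bar v^*+2((n-1)k_1+k_2)$, so that $b\to B=\sqrt{\hat\Lambda_1}+\sqrt{p\bar u^*}$ and $b'\to B'=\sqrt{\hat{\bar\Lambda}_1}+\sqrt{q\bar v^*}$. Passing to the limit in (\ref{new SZ Theorem 4.1}) and using that $y$ was arbitrary yields (\ref{new SZ corollary 4.1}).

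I expect the only delicate point to be the legitimacy of the limiting procedure, i.e.\ uniformity in $R$: one must note that $c_0,c_1$ are fixed by the choice of $\psi$ and are independent of $R$, that the constant $c_2$ of \cite{JSUN} depends only on the bound for $\mathcal S$ along the flow and not on $R$, and that the generalized Laplacian comparison inequalities used in Theorem \ref{Theorem 4.1} hold for every $R$. No compactness difficulty arises, since the maximum-principle step is carried out at finite $R$ on the compact set $D_{T_2}(2R)$ and the limit $R\to+\infty$ is taken only in the resulting pointwise inequalities. Finally one should record that the right-hand sides depend on $t$ only through $\bar u^*=e^{\lambda_1t+p\tilde\kappa_2-\kappa_1}$ and $\bar v^*=e^{\lambda_2t+q\tilde\kappa_1-\kappa_2}$, which is why the statement is phrased for $t\in[0,T]$.
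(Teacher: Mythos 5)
Your proof is correct and takes essentially the same route as the paper, whose own proof of this corollary is simply the one-line remark that the result follows from Theorem \ref{Theorem 4.1} by letting $R\to+\infty$. Your careful tracking of the $R$-dependence of $\Lambda_1,\Lambda_2,\bar\Lambda_1$ fills in the implicit bookkeeping; note, though, that for $\Omega\to c_2k_2$ one must read $\Omega=\frac{c_0}{R}(n-1)\bigl(\sqrt{k_1}+\tfrac{2}{R}\bigr)+\frac{3c_1}{R^2}+c_2k_2$ with $c_2k_2$ outside the $O(1/R)$ prefactor (as in Section~3 and as the stated form of $\hat\Lambda_1$ requires), rather than the misparenthesized expression appearing in the body of the proof of Theorem \ref{Theorem 4.1}, since the latter would give $\Omega\to0$ and fail to reproduce $\hat\Lambda_1$.
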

\begin{proof}
From Theorem \ref{Theorem 4.1} we have the local gradient estimate for positive solutions of (\ref{heat eqn 1.1}) along (\ref{flow 1.2}). Taking $R\to +\infty$ in (\ref{new SZ Theorem 4.1}) we get (\ref{new SZ corollary 4.1}). 
\end{proof}
\section{Concluding remark}
To find an exact solution of a partial differential equation (PDE) in higher dimensions turns out to be impossible in most of the cases. Gradient estimation allows us to uncover possible nature of the solutions for certain PDE's, without calculating its exact solution. In this article we have studied the system (\ref{heat eqn 1.1}) along (\ref{flow 1.2}) on a weighted Riemannian manifold and derived certain constants to bound the quantities $\frac{|\nabla f|}{\sqrt{f}}$, $\frac{|\nabla h|}{\sqrt{h}}$, $\frac{|\nabla f|}{f}$ and $\frac{|\nabla h|}{ h}$ in a definite way, where $(f,h)$ is a positive solution of (\ref{heat eqn 1.1}). This allows us to get an idea about the possible upper and lower bound for the gradient of those solutions.\\
\\
\vspace{0.1in}\noindent{\bf Acknowledgement:} This research work is partially supported by DST FIST programme (No. SR/FST/MSII/2017/10(C)).

\vspace{0.1in}
\noindent Shyamal Kumar Hui\\
\noindent Department of Mathematics, The University of Burdwan, Golapbag, Burdwan 713104, West Bengal, India. \\
\noindent E-mail: skhui@math.buruniv.ac.in\\

\noindent Shahroud Azami\\
\noindent Department of Pure Mathematics, Faculty of Sciences, Imam Khomeini International University, Qazvin, Iran.\\
\noindent E-mail: azami@sci.ikiu.ac.ir\\

\noindent Sujit Bhattacharyya\\
Department of Mathematics, The University of Burdwan, Golapbag, Burdwan 713104, West Bengal, India. \\
\noindent E-mail: sujitbhattacharyya.1996@gmail.com

\begin{thebibliography}{99}
\bibitem{SAZAMI}
S. Azami,  \emph{Gradient estimates for a weighted parabolic equation under geometric flow}, arXiv:2112.01271v1 [math.DG], 2021.
\bibitem{Bakry-Emery}
D. Bakry and M. \'Emery, \emph{Diffusions hypercontractives}, Seminaire de probabilities XIX 1983/84, Lecture notes in Math., Springer ({\bf 1123}) (1985), 177-206.
\bibitem{CALABI}
E. Calabi,  \emph{An extension of E. Hopf's maximum principle with an application to Riemannian geometry}, Duke Math. J., \textbf{25(1)} (1958), 45-56.
\bibitem{Feng-Liu}
Y. -H. Feng and C. -M. Liu, \emph{Stability of steady-state solutions to Navier-Stokes-Poisson systems}, Journal of Mathematical Analysis and Applications, {\bf 462(2)} (2018), 1679–1694.
\bibitem{Hamilton-1}
R. Hamilton, \emph{Three-manifolds with positive Ricci curvature}, Journal of Differential Geometry, {\bf 17(2)}(1982), 255–306.
\bibitem{Hamilton}
R. Hamilton, \emph{The Harnack estimate for the Ricci flow}, Journal of Differential Geometry, {\bf37} (1993), 225-243.
\bibitem{Li}
X. D. Li, \emph{Liouville theorems for symmetric diffusion operators on complete Riemannian manifolds}, J. Math. Pure. Appl., {\bf 84}(10) (2005), 1295-1361.
\bibitem{Li-Bao}
F. Li and Y. Bao, \emph{Uniform stability of the solution for a memory type elasticity system with nonhomogeneous boundary control condition}, Journal of Dynamical and Control Systems, {\bf23 (2)} (2017), 301–315.
\bibitem{Li-Yau}
P. Li and S.-T. Yau, \emph{On the parabolic kernel of the Schr\"odinger operator}, Acta Math., {\bf 156} (1986), 153-201.
\bibitem{SCOHEN}
R. Scohen and S.-T. Yau, \emph{Lecture on differential geometry}, International Press, Cambridge, MA, 1994.
\bibitem{SHEN-DING}
X. Shen, J. Ding, \emph{Blow-up phenomena in porous medium equation systems with nonlinear boundary conditions}, Computers and Mathematics with Applications, {\bf 77} (2019), 3250-3263.
\bibitem{SOUPLET-ZHANG}
P. Souplet and Q. S. Zhang, \emph{Sharp gradient estimate and Yau’s Liouville theorem for the heat equation on noncompact manifolds}, Bull. London Math. Soc., \textbf{38} (2006), 1045-1053.
\bibitem{JSUN}
J. Sun, \emph{ Gradient estimates for positive solutions of the heat equation under geometric flow}, Pacific J. Math., \textbf{253(2)} (2011), 489-510.
\bibitem{WINKLER}
M. Winkler, \emph{Finite-time blow-up in the higher-dimensional parabolic–parabolic Keller–Segel system}, J. Math. Pure. Appl., {\bf 100} (2013), 748-767.
\bibitem{WU}
H. Wu, \emph{Differential Harnack Estimates for a Semilinear Parabolic System}, Hindawi Journal of Function Spaces, {\bf 2019}, Article ID 1314268, https://doi.org/10.1155/2019/1314268.
\bibitem{WU-2}
J.-Y. Wu, \emph{Li–Yau type estimates for a nonlinear parabolic equation on complete manifolds}, J. Math. Anal. Appl., \textbf{369} (2010), 400-407.
\bibitem{Wu-Yang}
H. Wu and X. Yang, \emph{Global existence and finite time blow-up for a parabolic system on hyperbolic space,} Journal of Mathematical Physics, {\bf 59} (2018), 1–11.
\bibitem{Zheng-Shang}
X. Zheng, Y. Shang, and H. Di, \emph{The time-periodic solutions to the modified Zakharov equations with a quantum correction}, Mediterranean Journal of Mathematics, {\bf 14} (2017), 1–17.
	
\end{thebibliography}
\end{document}